\newcommand{\FF}{\mathbb{F}}
\newcommand{\NN}{\mathbb{N}}
\newcommand{\QQ}{\mathbb{Q}}
\newcommand{\RR}{\mathbb{R}}
\newcommand{\ZZ}{\mathbb{Z}}
\newcommand{\setstyle}{\mathscr}
\newcommand{\setA}{\setstyle{A}}
\newcommand{\setC}{\setstyle{C}}
\newcommand{\setS}{\setstyle{S}}
\newcommand{\idealP}{\mathfrak{p}}
\newcommand{\fieldK}{K}
\newcommand{\fieldL}{L}
\newcommand{\intring}{\ZZ_\fieldK}
\newcommand{\valring}{\mathcal{O}_\idealP}
\newcommand{\polyring}{\intring[x]}
\newcommand{\fieldKp}{\fieldK_\idealP}
\newcommand{\CK}{\setC(\fieldK)}
\newcommand{\CKp}{\setC(\fieldKp)}
\newcommand{\CZ}{\setC(\intring)}
\newcommand{\COp}{\setC(\valring)}
\newcommand{\un}[1][\fieldKp]{#1^\times}
\newcommand{\unif}{\pi}
\newcommand{\pow}{\fieldKp^p}
\newcommand{\unpow}{\fieldKp^{\times p}}
\newcommand{\pf}{F}
\newcommand{\pg}{G}
\newcommand{\ph}{H}
\newcommand{\st}{\;|\;}
\newcommand{\quo}[2]{#1/#2}
\newcommand{\card}[1]{|#1|}
\DeclareMathOperator{\kras}{kras}
\DeclareMathOperator{\lc}{lc}
\DeclareMathOperator{\ord}{ord}
\newcommand{\ordp}{\ord_\idealP}
\DeclareMathOperator{\rev}{rev}
\newcommand{\onto}{\twoheadrightarrow}
\newcommand{\term}[1]{\emph{#1}}
\newtheorem{thm}{Theorem}[section]
\newtheorem{alg}[thm]{Algorithm}
\newtheorem{cor}[thm]{Corollary}
\newtheorem{lem}[thm]{Lemma}
\newtheorem{obs}[thm]{Observation}
\newtheorem{prop}[thm]{Proposition}
\theoremstyle{remark}
\newtheorem{rem}[thm]{Remark}
\newenvironment{poc}{\begin{proof}[Proof of correctness]}{\end{proof}}
\title{Polynomials assuming only local prime powers}
\author{Przemysław Koprowski}
\begin{document}
\begin{abstract}
We study the class of polynomials that map a local field (i.e., the completion of a number field at a non-Archimedean place) into the subset of its $p$-th powers, where $p$ is the residue characteristic of the field in question. We present a characterization of such polynomials and show that this class is always much broader than the class of $p$-th powers of polynomials.
\end{abstract}
\keywords{polynomials, prime powers, valued fields, local fields}
\subjclass{Primary: 11S05, 11C08; Secondary: 12J25}
\maketitle

\section{Introduction}
It is a classical result (see, e.g., \cite[Problem~114]{PS1998}) that if $\pf\in \ZZ[x]$ is a polynomial with integer coefficients such that $\pf(a)$ is a square for every rational argument $a \in \QQ$, then $\pf$ itself must be the square of another polynomial. There is also a multivariate generalization of this theorem due to Murty (see \cite{Murty2008}).

This raises the natural question of whether similar statements remain valid over other fields occurring naturally in algebraic number theory. In a private conversation, Carlos D’Andrea asked the author if the above result extends to the setting of $2$-adic numbers. More precisely: does there exist a polynomial $\pf\in \ZZ[x]$ such that $\pf(a)$ is a $2$-adic square for every rational number $a \in \QQ$, while $\pf$ itself is not the square of any polynomial with $2$-adic coefficients? An example of such a polynomial is $4x^4 + 4x^2 + 9$. We omit the proof here, since a more general result will be established later (see Proposition~\ref{prop:non_power_poly}). Nonetheless, this question motivated a systematic study of a class of polynomials--- denoted~$\CKp$---with coefficients in rings of algebraic integers, whose values are always local prime powers, where the relevant exponents coincide with the residue characteristic of the local field. A precise definition of the class $\CKp$ is given in Section~\ref{sec:notation}.

The class $\CKp$ has an interesting geometric interpretation. Recall (see, e.g., \cite{FV2002}) that if $\fieldKp$ is a local field (i.e., the completion of a number field with respect to a non-Archimedean valuation) with residue characteristic~$p$, then $\fieldKp$ can be partitioned into finitely many disjoint open sets of the form $a\unpow$ (so they are classes of $p$-th powers of nonzero elements of $\fieldKp$), together with the singleton $\{0\}$, which serves as their common boundary point. Thus, $\fieldKp$ can be visualized as a `fan' of homeomorphic sets of the form $a\unpow$, joined together at zero (see Figure~\ref{fig:Q3}). A polynomial~$\pf$ lies in~$\CKp$ precisely when it collapses this `fan' structure into a single `blade', mapping the entire local field~$\fieldKp$ into the set of $p$-th powers. One can think of the class~$\CKp$ as---to some extent---analogous to the set of real polynomials that are non-negative everywhere. The field~$\RR$ decomposes into two open half-lines, $-1\RR_{+}$ and $1\RR_{+}$, together with their common boundary point $\{0\}$. A polynomial that is nonnegative everywhere can be seen as folding these two half-lines onto one another.

In this paper, we study the class~$\CKp$ and several related classes. The paper is organized as follows. In Section~\ref{sec:notation} we introduce the relevant notation and, in particular, define the class $\setC(\setA)$ for any subset~$\setA$ of~$\fieldKp$. Section~\ref{sec:preliminaries} presents a characterization of the polynomials that map the entire local field~$\fieldKp$ into its set of $p$-th powers (see Proposition~\ref{prop:CK_iff_F_and_revF}). The class~$\CKp$ is decidable by the classical Ax–Kochen theorem; in Section~\ref{sec:decidability} we give a simple explicit algorithm (Algorithm~\ref{alg:test_CK}) for testing membership in this class. Finally, in Section~\ref{sec:Pth_powers} we return to the motivating question and show (see Proposition~\ref{prop:non_power_poly}) that the class of polynomials mapping~$\fieldKp$ into its set of $p$-th powers is always strictly larger than the class of $p$-th powers of polynomials—so the analogue of the classical result mentioned at the beginning never holds over local fields.

In this paper, we study the class~$\CKp$ and some of its related classes. The paper is organized as follows. In Section~\ref{sec:notation} we introduce all the relevant notation. In particular, we define the class $\setC(\setA)$ for any subset~$\setA$ of~$\fieldKp$. Next, in Section~\ref{sec:preliminaries} we present the characterization of the polynomials that map the entire local field~$\fieldKp$ into the set of its $p$-th powers (see Proposition~\ref{prop:CK_iff_F_and_revF}). The class~$\CKp$ is decidable by the classical Ax–Kochen theorem; in Section~\ref{sec:decidability} we give a simple explicit algorithm (Algorithm~\ref{alg:test_CK}) for testing membership in this class. Finally, in Section~\ref{sec:Pth_powers} we return to the motivating question and show (see Proposition~\ref{prop:non_power_poly}) that the class of polynomials mapping~$\fieldKp$ into its set of $p$-th powers is always strictly larger than the class of $p$-th powers of polynomials—so the analogue of the classical result mentioned at the beginning never holds over local fields.

%

%
%
%
\section{Notation}\label{sec:notation}
Throughout this paper, we use the following notation. Let $\fieldK$ be a fixed number field and let $\intring$ denote its ring of integers (i.e., the integral closure of~$\ZZ$ in~$\fieldK$). Next, let $\idealP$ be a prime of~$\fieldK$ lying above a rational prime~$p$. By $\fieldKp$ we denote the completion of~$\fieldK$ at~$\idealP$. We write $\ordp$ for the unique normalized valuation $\fieldKp \to \ZZ$, as well as its restriction to~$\fieldK$. The associated valuation ring is $\valring = \{ a\in \fieldKp\st \ordp a\geq 0\}$. ts maximal ideal is~$\idealP$, and the quotient $\quo{\valring}{\idealP}$ is a finite field, called the \term{residue class field} of~$\idealP$. An element of valuation one is called a \term{uniformizer} of~$\idealP$. By the strong approximation theorem, one can always choose a uniformizer lying in~$\intring$. It will be denoted~$\unif$ throughout the paper. Finally, we use the standard notation $e = e(\idealP \mid p)$ and $f = f(\idealP \mid p)$ for the \term{ramification index} (i.e., the valuation of~$p$) and the \term{inertia degree} (i.e., the degree of the residue class field over its prime field~$\FF_p$), respectively. The quotient group $\quo{\un}{\unpow}$ of the multiplicative group of~$\fieldKp$ modulo the $p$-th powers is an elementary $p$-group. By Artin's theorem (see, e.g., \cite[Theorem~2.36]{Clark2010} or \cite[Proposition~II.6]{Lang1994}) its order is precisely $p^{ef + 2}$ if $\fieldKp$ contains a primitive $p$-th root of unity, and $p^{ef + 1}$ otherwise.

As explained in the introduction, our main interest lies in polynomials whose values are local $p$-th powers, where $p$ is the residue characteristic of the considered local field. For any nonempty subset $\setA \subseteq \fieldKp$, we define
\[
\setC(\setA) := 
\bigl\{ \pf\in \polyring \st \pf(a)\in \pow\text{ for every }a\in \setA\bigr\}.
\]
We are particularly interested in the cases when $\setA$ is one of the rings $\intring$, $\valring$, $\fieldK$, or $\fieldKp$.

\begin{figure}
\begin{tikzpicture}[scale=3]
  \newcommand{\qqq}{\QQ_3^{\times 3}}
  \def\labels{{ "$\qqq$", "$2\cdot \qqq$", "$3\cdot \qqq$", "$4\cdot \qqq$", "$6\cdot \qqq$", 
              "$9\cdot \qqq$", "$12\cdot \qqq$", "$18\cdot \qqq$", "$36\cdot \qqq$" }}

\foreach \k in {0,...,8} {
  \begin{scope}[rotate=-\k*40]
    \ifnum\k=0
      \path[draw, fill=gray!30, dotted] svg "M 0,0 C 9,28 10,40 0,40 -10,40 -9,28 0,0 Z"; 
      \node at (0,1) {$\qqq$};
    \else
      \path[draw, fill=gray!5, dotted] svg "M 0,0 C 9,28 10,40 0,40 -10,40 -9,28 0,0 Z";
      \node at (0,1) {\strut\pgfmathprint{\labels[\k]}};      
    \fi
  \end{scope}
}
  \fill (0,0) circle (1.5pt);
  \node[below left] at (0,0) {\large $0$};

\end{tikzpicture}
\caption{\label{fig:Q3}The field $\QQ_3$ as a union of classes of third powers.}
\end{figure}
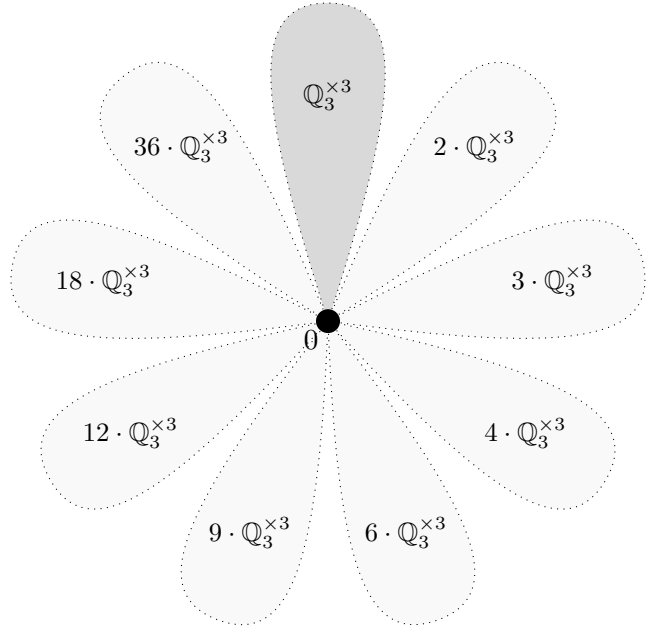

\section{Preliminaries}\label{sec:preliminaries}
The following proposition and its corollary will be used repeatedly throughout the paper. Although they are classical results, we state them explicitly for the reader’s convenience.

\begin{prop}[{\cite[Corollary~I.5.8.2]{FV2002}}]\label{prop:local_power_thm}
For every $k> \sfrac{ep}{(p-1)}$ one has $1 + \idealP^k\subseteq \unpow$.
\end{prop}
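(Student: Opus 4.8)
The plan is to establish the equivalent statement that every element of $1 + \idealP^k$---that is, every $1 + a$ with $\ordp a \ge k > \sfrac{ep}{(p-1)}$---has a $p$-th root in $\fieldKp$, by evaluating the binomial series. Consider the formal power series
\[
B(T) \;=\; \sum_{n\ge 0}\binom{1/p}{n}T^n \;\in\; \QQ[[T]],
\]
which satisfies the formal identity $B(T)^p = 1+T$ (a Vandermonde-type identity on binomial coefficients). I would substitute $T = a$ and show the resulting series converges in the complete field $\fieldKp$ to an element $y$ with $y^p = 1+a$.

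The crux is a valuation estimate on the terms. For $n \ge 1$ the product $p^n\binom{1/p}{n} = \prod_{j=0}^{n-1}(1 - jp)$ is a product of rational integers prime to $p$, hence a unit of $\valring$, so $\ordp\binom{1/p}{n} = -e\bigl(n + \ord_p(n!)\bigr)$. Combining this with Legendre's formula $\ord_p(n!) = \frac{n - s_p(n)}{p-1} \le \frac{n-1}{p-1}$, where $s_p(n) \ge 1$ denotes the base-$p$ digit sum of $n$, yields
\[
\ordp\!\left(\binom{1/p}{n}a^n\right) \;=\; n\ordp(a) - en - e\,\ord_p(n!) \;\ge\; n\!\left(k - \tfrac{ep}{p-1}\right) + \tfrac{e}{p-1}.
\]
Since $k > \sfrac{ep}{(p-1)}$ the coefficient of $n$ is strictly positive, so these valuations tend to $+\infty$; as $\fieldKp$ is complete, the series $y := \sum_{n\ge 0}\binom{1/p}{n}a^n$ converges, and every term with $n \ge 1$ has positive valuation, so in fact $y \in \valring$ with $y \equiv 1 \pmod{\idealP}$.

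It remains to verify $y^p = 1+a$. Because the summands tend to $0$, the series converges unconditionally, and its $p$-th power may be computed as a Cauchy product and regrouped by total degree: $y^p = \sum_{m\ge 0} c_m a^m$, where $c_m$ is the coefficient of $T^m$ in $B(T)^p$. Applying the estimate above to each factor of a product $\binom{1/p}{n_1}a^{n_1}\cdots\binom{1/p}{n_p}a^{n_p}$ with $n_1 + \cdots + n_p = m$ shows $\ordp(c_m a^m) \ge m\bigl(k - \sfrac{ep}{(p-1)}\bigr) + \sfrac{e}{(p-1)}$ for $m\ge 1$, so $\ordp(c_m a^m)\to\infty$ and the rearrangement is legitimate. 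The formal identity $B(T)^p = 1+T$ now forces $c_0 = c_1 = 1$ and $c_m = 0$ for $m\ge 2$, hence $y^p = 1+a \in \unpow$, which is the assertion.

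The step I expect to be the main obstacle is the convergence estimate, precisely because the bound is sharp: it is exactly the inequality $k > e + \tfrac{e}{p-1}$ that makes the linear coefficient positive, so Legendre's formula must be applied with the correct constant and the borderline indices ($n$ a power of $p$, where $s_p(n) = 1$ and $\ord_p(n!)$ attains $\frac{n-1}{p-1}$) genuinely control the argument. A more structural alternative that sidesteps this combinatorics uses the $p$-adic exponential: for any integer $j > \tfrac{e}{p-1}$ the map $\exp\colon (\idealP^j, +) \xrightarrow{\ \sim\ } (1 + \idealP^j, \times)$ is an isomorphism of topological groups intertwining $\xi \mapsto p\xi$ with $x \mapsto x^p$, and since multiplication by $p$ carries $\idealP^j$ bijectively onto $\idealP^{j+e}$ one obtains $(1+\idealP^j)^p = 1 + \idealP^{j+e}$. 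Taking $j = k - e$, which is an integer exceeding $\tfrac{e}{p-1}$ by hypothesis, gives $1 + \idealP^k = \bigl(1 + \idealP^{k-e}\bigr)^p \subseteq \unpow$.
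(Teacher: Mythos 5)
Your argument is correct, but it is worth noting that the paper does not prove this statement at all: it is quoted as a classical result with a citation to Fesenko--Vostokov \cite[Corollary~I.5.8.2]{FV2002}, so you have supplied a self-contained proof where the paper supplies none. Both of your routes are standard and both work. The binomial-series computation is carried out correctly: the key estimate $\ordp\bigl(\binom{1/p}{n}a^n\bigr)\geq n\bigl(k-\sfrac{ep}{(p-1)}\bigr)+\sfrac{e}{(p-1)}$ follows from Legendre's formula exactly as you say, and the hypothesis $k>\sfrac{ep}{(p-1)}$ is precisely what makes the linear coefficient positive; the Cauchy-product rearrangement is legitimate in a complete non-Archimedean field once the terms tend to zero, and the formal identity $B(T)^p=1+T$ then forces $y^p=1+a$. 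One sentence of your write-up is misstated, though the conclusion you draw from it is right: $p^n\binom{1/p}{n}$ equals $\prod_{j=0}^{n-1}(1-jp)\,\big/\,n!$, not $\prod_{j=0}^{n-1}(1-jp)$, so it is \emph{not} a unit of $\valring$; the correct assertion is that $p^n\,n!\,\binom{1/p}{n}$ is a unit, which is what yields $\ordp\binom{1/p}{n}=-e\bigl(n+\ord_p(n!)\bigr)$ as in your displayed formula. The alternative via the $p$-adic exponential is cleaner and is essentially the argument in the cited reference: for an integer $j>\sfrac{e}{(p-1)}$ the exponential gives an isomorphism $(\idealP^j,+)\cong(1+\idealP^j,\times)$ carrying multiplication by $p$ to $x\mapsto x^p$, whence $(1+\idealP^{k-e})^p=1+\idealP^k$, and $k-e>\sfrac{e}{(p-1)}$ is equivalent to the stated hypothesis. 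Either version would serve as a proof if the paper wished to include one.
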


\begin{cor}\label{cor:equal_classes_suff}
Let $a, b \in \valring$ be two elements of equal valuation $v := \ordp(a) = \ordp(b)$. If
\[
a \equiv b \pmod{\idealP^k}
\qquad\text{for some } k > v + \frac{ep}{p-1},
\]
then the cosets $a \unpow$ and $b \unpow$ coincide.
\end{cor}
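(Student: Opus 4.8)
The plan is to deduce this immediately from Proposition~\ref{prop:local_power_thm} by examining the quotient $a/b$. Since $k > v + \frac{ep}{p-1}$ forces $v$ to be finite, both $a$ and $b$ are nonzero, so the quotient makes sense and $\ordp(a/b) = v - v = 0$; thus $a/b \in \un$. It therefore suffices to show that $a/b$ is a $p$-th power, for then $a = (a/b)\cdot b$ shows $a \in b\unpow$, whence $a\unpow = b\unpow$.

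To see that $a/b \in \unpow$, I would write
\[
\frac{a}{b} = 1 + \frac{a-b}{b},
\]
and estimate the valuation of the second summand. By hypothesis $\ordp(a-b) \geq k$, so
\[
\ordp\!\left(\frac{a-b}{b}\right) = \ordp(a-b) - \ordp(b) \geq k - v > \frac{ep}{p-1}.
\]
Hence $a/b \in 1 + \idealP^{\,k-v}$ with $k - v > \frac{ep}{p-1}$, and Proposition~\ref{prop:local_power_thm} (applied with the exponent $k - v$) gives $1 + \idealP^{\,k-v} \subseteq \unpow$, so $a/b \in \unpow$ as required.

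Honestly there is no real obstacle here: the corollary is just the multiplicative reformulation of Proposition~\ref{prop:local_power_thm}, and the only point that needs a word of care is the bookkeeping of valuations when passing from the additive congruence $a \equiv b \pmod{\idealP^k}$ to the multiplicative statement $a/b \in 1 + \idealP^{k-v}$ — which is exactly why the threshold in the hypothesis is shifted by $v$ relative to the one in Proposition~\ref{prop:local_power_thm}.
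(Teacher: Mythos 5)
Your argument is correct and is exactly the derivation the paper intends (it states the corollary without proof as an immediate consequence of Proposition~\ref{prop:local_power_thm}): dividing by $b$ converts the additive congruence into $a/b \in 1 + \idealP^{k-v}$ with $k - v > \sfrac{ep}{(p-1)}$, and the proposition finishes the job. The valuation bookkeeping, including the observation that the hypothesis forces $a, b \neq 0$, is handled properly.
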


As mentioned at the end of the previous section, we focus on the relations between the classes $\CZ$, $\COp$, $\CK$, and $\CKp$. These classes satisfy the natural inclusions
\[
\begin{tikzpicture}
  \matrix (m) [matrix of math nodes,
               row sep=1em, column sep=2em,
               text height=1.5ex, text depth=0.25ex]
  {
    \CK & \CZ \\
    \CKp & \phantom{.}\COp. \\
  };

  \node at ($(m-1-1)!0.5!(m-1-2)$) {$\subseteq$};
  \node at ($(m-2-1)!0.5!(m-2-2)$) {$\subseteq$};

  \node at ($(m-1-1)!0.5!(m-2-1)$) {\rotatebox{90}{$\subseteq$}};
  \node at ($(m-1-2)!0.5!(m-2-2)$) {\rotatebox{90}{$\subseteq$}};
\end{tikzpicture}
\]
We will prove that in the above diagram, the vertical inclusions are in fact equalities, while the horizontal inclusions are strict. Let us begin with the vertical ones.

\begin{prop}\label{prop:CK_eq_CKp}
The following equalities hold:
\[
\CK = \CKp\qquad\text{and}\qquad \CZ = \COp. 
\]
\end{prop}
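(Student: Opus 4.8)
The plan is to prove both equalities by the same density argument, using the fact that $\intring$ is dense in $\valring$ and $\fieldK$ is dense in $\fieldKp$ (with respect to the $\idealP$-adic topology), combined with Corollary~\ref{cor:equal_classes_suff}. The two nontrivial inclusions are $\CKp \subseteq \CK$ and $\COp \subseteq \CZ$ (the reverse inclusions being instances of the diagram already displayed, and in any case trivial from the definitions). Since the arguments are essentially identical, I would phrase one of them — say $\COp \subseteq \CZ$ — in full, and remark that the other follows mutatis mutandis, or better, prove the following slightly more general statement: if $\setA \subseteq \fieldKp$ is dense in its closure $\overline{\setA}$, then $\setC(\setA) = \setC(\overline{\setA})$. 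Applying this with $\setA = \intring$ (whose closure is $\valring$) and with $\setA = \fieldK$ (whose closure is $\fieldKp$) yields both equalities at once. Actually one must be slightly careful: $\overline{\fieldK} = \fieldKp$ holds by definition of completion, and $\overline{\intring} = \valring$ holds because $\intring$ is dense in $\valring$ — this is standard (e.g.\ it follows from strong approximation, or from the fact that $\valring = \varprojlim \intring/\idealP^k$ and each $\intring/\idealP^k = \valring/\idealP^k$).

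The heart of the matter is the following: suppose $\pf \in \setC(\setA)$ and let $a \in \overline{\setA}$; we must show $\pf(a) \in \pow$. The case $\pf(a) = 0$ is immediate since $0 = 0^p \in \pow$, so assume $\pf(a) \neq 0$ and set $v := \ordp \pf(a) \in \ZZ$. Because $\pf$ is a polynomial, hence continuous for the $\idealP$-adic topology, and since $\setA$ is dense in $\overline{\setA}$, we can pick $b \in \setA$ arbitrarily close to $a$; by continuity $\pf(b)$ is then arbitrarily close to $\pf(a)$. Concretely, fix any integer $k > v + \frac{ep}{p-1}$. Continuity of $\pf$ provides $m$ such that $a \equiv b \pmod{\idealP^m}$ implies $\pf(a) \equiv \pf(b) \pmod{\idealP^k}$; shrinking if necessary we may also demand $m$ large enough that this congruence forces $\ordp \pf(b) = \ordp \pf(a) = v$ (this is automatic once $k > v$, since $\pf(a) \equiv \pf(b) \pmod{\idealP^{v+1}}$ and $\ordp\pf(a) = v$ give $\ordp \pf(b) = v$). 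Now choose such a $b \in \setA$ using density. Then $\pf(b) \in \pow$ because $b \in \setA$ and $\pf \in \setC(\setA)$, and $\pf(a) \equiv \pf(b) \pmod{\idealP^k}$ with $\ordp \pf(a) = \ordp \pf(b) = v$ and $k > v + \frac{ep}{p-1}$, so Corollary~\ref{cor:equal_classes_suff} gives $\pf(a)\unpow = \pf(b)\unpow$. Since $\pf(b) \in \pow$, i.e.\ its class in $\un/\unpow$ is trivial, so is the class of $\pf(a)$, whence $\pf(a) \in \pow$. As $a \in \overline{\setA}$ was arbitrary, $\pf \in \setC(\overline{\setA})$.

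The one point deserving explicit care — and the only place where anything beyond soft topology enters — is the explicit control of the modulus: we need the approximating $b$ to satisfy a congruence modulo $\idealP^k$ with $k$ strictly above the Krasner-type bound $v + \frac{ep}{p-1}$ appearing in Corollary~\ref{cor:equal_classes_suff}, \emph{and simultaneously} we need to know $\ordp \pf(b) = v$ so that the corollary's hypothesis of equal valuations is met. Both are achieved by taking $k$ large at the outset and invoking continuity of the polynomial map $\pf \colon \fieldKp \to \fieldKp$ (a polynomial is clearly $\idealP$-adically continuous, since addition and multiplication are); the valuation equality is then a free consequence, as noted above, of $k > v$. I do not anticipate a genuine obstacle here: the proposition is really a routine consequence of continuity plus the ``local power theorem'' encapsulated in Corollary~\ref{cor:equal_classes_suff}. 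The only thing to get right is not to circularly assume $\ordp\pf(b)$ is finite before one has ensured $\pf(b) \neq 0$ — but $\pf(b) \equiv \pf(a) \not\equiv 0 \pmod{\idealP^{v+1}}$ settles that too.

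\begin{proof}
We prove the more general claim: if a subset $\setA \subseteq \fieldKp$ is dense in its topological closure $\overline{\setA}$ (for the $\idealP$-adic topology), then $\setC(\setA) = \setC(\overline{\setA})$. Since $\fieldK$ is dense in $\fieldKp = \overline{\fieldK}$ by the definition of the completion, and $\intring$ is dense in $\valring = \overline{\intring}$, applying this with $\setA = \fieldK$ yields $\CKp = \CK$ and with $\setA = \intring$ yields $\COp = \CZ$.

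The inclusion $\setC(\overline{\setA}) \subseteq \setC(\setA)$ is trivial. For the reverse, let $\pf \in \setC(\setA)$ and let $a \in \overline{\setA}$; we show $\pf(a) \in \pow$. If $\pf(a) = 0$ then $\pf(a) = 0^p \in \pow$, so assume $\pf(a) \neq 0$ and put $v := \ordp \pf(a)$. Fix an integer $k > v + \frac{ep}{p-1}$. The polynomial map $\pf \colon \fieldKp \to \fieldKp$ is continuous for the $\idealP$-adic topology, so there is an integer $m$ such that $a \equiv b \pmod{\idealP^m}$ implies $\pf(a) \equiv \pf(b) \pmod{\idealP^k}$. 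By density of $\setA$ in $\overline{\setA}$, choose $b \in \setA$ with $a \equiv b \pmod{\idealP^m}$; then $\pf(a) \equiv \pf(b) \pmod{\idealP^k}$. Since $k > v$, this congruence gives $\pf(b) \equiv \pf(a) \not\equiv 0 \pmod{\idealP^{v+1}}$, hence $\ordp \pf(b) = v = \ordp \pf(a)$. As $b \in \setA$ and $\pf \in \setC(\setA)$, we have $\pf(b) \in \pow$. Now $\pf(a)$ and $\pf(b)$ have equal valuation $v$ and satisfy $\pf(a) \equiv \pf(b) \pmod{\idealP^k}$ with $k > v + \frac{ep}{p-1}$, so Corollary~\ref{cor:equal_classes_suff} yields $\pf(a)\unpow = \pf(b)\unpow$. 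Since $\pf(b) \in \pow = 1 \cdot \unpow \cup \{0\}$ and $\pf(b) \neq 0$, the coset $\pf(b)\unpow$ equals $\unpow$, hence $\pf(a) \in \unpow \subseteq \pow$. As $a \in \overline{\setA}$ was arbitrary, $\pf \in \setC(\overline{\setA})$.
\end{proof}
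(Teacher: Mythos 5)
Your proof is correct and uses essentially the same argument as the paper: density of $\fieldK$ in $\fieldKp$ (resp.\ $\intring$ in $\valring$), continuity of polynomial maps, and Corollary~\ref{cor:equal_classes_suff} to transfer the $p$-th power property from the approximating point. The only differences are cosmetic --- you argue directly rather than by contradiction, package both equalities as one statement about $\setC(\setA)=\setC(\overline{\setA})$, and spell out the zero case and the equality of valuations slightly more explicitly than the paper does.
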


\begin{proof}
We prove only the first statement. The other one is fully analogous. One inclusion is immediate and was already observed above. It remains to show that $\CK \subseteq \CKp$. The key ingredients are the density of $\fieldK$ in $\fieldKp$ and the fact that $\un$ decomposes into a finite disjoint union of open subsets.

Fix a nonzero element $a \in \fieldKp$. Since $\fieldK$ is dense in~$\fieldKp$, there exists a sequence $(a_i)_{i \in \NN} \subseteq \fieldK$ converging to~$a$ in the $\idealP$-adic topology. A contrario, suppose that $\pf(a)$ is not a $p$-th power in~$\fieldKp$. In particular, $\pf(a) \neq 0$. Write
\[
\pf(a) = \unif^v \cdot u,
\]
where $v := \ordp \pf(a)$, $\unif \in \intring$ is a uniformizer, and $u$ is a $\idealP$-unit. Since polynomials are continuous with respect to the $\idealP$-adic topology, we have
\[
\pf(a) = \lim_{i \to \infty} \pf(a_i).
\]
Therefore, there is an index~$i$ such that
\[
\ordp \bigl( \pf(a) - \pf(a_i)\bigr) > v + \frac{ep}{p - 1}.
\]
Corollary~\ref{cor:equal_classes_suff} asserts that $\pf(a)$ and $\pf(a_i)$ belong to the same $p$-th power class. But this contradicts our assumption: $\pf(a_i) \in \pow$ while $\pf(a) \notin \pow$. This contradiction proves the claim.
\end{proof}

Before establishing the relationship between $\CZ$ and $\CK$, we need to prove some auxiliary results.

\begin{lem}\label{lem:p_div_deg_and_ord}
If $\pf\in \CK$, then both the degree of~$\pf$ and the $\idealP$-adic valuation of its leading coefficient are divisible by~$p$.
\end{lem}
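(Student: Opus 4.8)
The plan is to analyze the behavior of $\pf$ at arguments of large negative valuation, where the leading term dominates. Write $\pf = c_n x^n + c_{n-1}x^{n-1} + \dots + c_0$ with $c_n \neq 0$, and set $d := \ordp c_n$. For an element $a \in \fieldK$ with $\ordp a = -m$ for $m$ a large positive integer, the valuations of the successive terms $c_j a^j$ are $\ordp c_j - jm$, and for $m$ sufficiently large the term $j = n$ strictly dominates all others (since the function $j \mapsto \ordp c_j - jm$ is eventually controlled by its slope $-m$). Hence $\ordp \pf(a) = d - nm$ for all such $a$. Since $\pf \in \CK = \CKp$, the value $\pf(a)$ is a $p$-th power, so its valuation $d - nm$ must be divisible by $p$.

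The next step is to exploit the freedom in choosing $m$. I would pick two admissible values $m$ and $m+1$ (both large enough that the leading term dominates, which is possible because there are infinitely many uniformizing-type elements of $\fieldK$ of each sufficiently negative valuation — concretely one can take $a = \unif^{-m}$ using the uniformizer $\unif \in \intring$). Then both $d - nm$ and $d - n(m+1)$ are divisible by $p$, and subtracting gives $p \mid n$. Feeding this back, $p \mid nm$, so from $p \mid d - nm$ we conclude $p \mid d$. This establishes both divisibility claims.

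The step that needs the most care is justifying that the leading term dominates for a suitable explicit range of $m$ and, relatedly, that arguments of the required valuation actually exist in $\fieldK$ (not merely in $\fieldKp$). The latter is handled by taking powers of the integral uniformizer $\unif$, so $a = \unif^{-m} \in \fieldK$ works. For the former, the crude bound is: if $m > \max_{0 \le j < n}\bigl(\ordp c_j - d\bigr)/(n - j)$ — interpreting the maximum over the finitely many $j$ with $c_j \neq 0$, and taking $m \ge 1$ if $\pf$ is a monomial — then $\ordp(c_n a^n) = d - nm < \ordp c_j - jm = \ordp(c_j a^j)$ for every $j < n$, so by the ultrametric inequality $\ordp \pf(a) = d - nm$ exactly. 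Since this lower bound on $m$ is a fixed finite number, infinitely many valid $m$ remain, in particular two consecutive ones, which is all the argument requires.

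A small alternative worth mentioning: instead of two consecutive values of $m$ one could invoke Proposition~\ref{prop:CK_iff_F_and_revF} together with the reciprocal polynomial $\rev \pf$, since the constant term of $\rev\pf$ is $c_n$ and applying the degree/constant-term information symmetrically also yields $p \mid n$ and $p \mid d$; but the direct computation above is self-contained and does not depend on that proposition, so I would present it as the main proof.
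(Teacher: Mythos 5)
Your argument is essentially the paper's: evaluate $\pf$ at negative powers of the integral uniformizer, where the ultrametric inequality forces $\ordp \pf(\unif^{-m}) = \ordp c_n - nm$, and then use the fact that valuations of $p$-th powers are divisible by $p$. The only difference is cosmetic — you extract $p \mid n$ and $p \mid \ordp c_n$ from two consecutive admissible exponents $m$, $m+1$, whereas the paper argues by contradiction with a single exponent $k$ chosen so that $kd + \ordp c_n \not\equiv 0 \pmod p$; both endgames are fine. One small slip: your explicit threshold has the sign reversed — the condition $d - nm < \ordp c_j - jm$ is equivalent to $m > (d - \ordp c_j)/(n-j)$, not $m > (\ordp c_j - d)/(n-j)$ — but since your argument only uses that all sufficiently large $m$ work, this does not affect correctness.
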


\begin{proof}
Let $\pf = \pf_0 + \pf_1x + \dotsb + \pf_dx^d$, with $\pf_d \neq 0$. A contrario, suppose that either $d$ or $\ordp \pf_d$ is not a multiple of~$p$. Then there is an integer $k\in \ZZ$, possibly negative, such that $p$ does not divide the expression $kd + \ordp \pf_d$ and
\[
k < \min \Bigl\{
\bigl\lfloor \ordp (\pf_i/\pf_d)^{\sfrac{1}{(d - i)}} \bigr\rfloor
\st
i < d\Bigr\}.
\]
Let $\unif\in \intring$ be a uniformizer of~$\idealP$. It follows from the above inequality that
\[
\ordp\!\bigl(\pf_d \unif^{kd}\bigr) < \ordp\!\bigl(\pf_i \unif^{ki}\bigr)
\qquad\text{for all}\ i \in \{0,\dotsc,d-1\}.
\]
Consequently, we have 
\[
\ordp \pf(\unif^k) 
= \ordp \bigl( \pf_d\cdot \unif^{kd}\bigr)
= kd + \ordp \pf_d.
\]
By construction, the right-hand side is not divisible by~$p$. Hence, $\pf(\unif^k)$ cannot be a $p$-th power in~$\fieldKp$, contradicting the assumption $\pf\in \CK$. This completes the proof.
\end{proof}

Let $\pf = \pf_0 + \pf_1x + \dotsb + \pf_dx^d$ be a polynomial of degree~$d$. Recall that the \term{reciprocal polynomial} $\rev\pf$ is defined by the formula 
\[
\rev\pf = x^d\cdot \pf\Bigl(\frac{1}{x}\Bigr)
= \pf_0x^d + \pf_1x^{d - 1} + \dotsb + \pf_d.
\]

\begin{lem}\label{lem:\pf_if\pf_revf}
A polynomial $\pf$ belongs to $\CK$ if and only if its reciprocal $\rev\pf$ does.
\end{lem}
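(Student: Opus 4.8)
The plan is to exploit the fact that for a polynomial $\pf$ of degree $d$, the reciprocal $\rev\pf$ evaluates as $\rev\pf(a) = a^d\cdot\pf(1/a)$ for every nonzero $a$, so the two polynomials are tied together by a simple substitution. The essential point will be that $a^d$ is itself a $p$-th power whenever $d$ is divisible by $p$, and Lemma~\ref{lem:p_div_deg_and_ord} guarantees precisely this divisibility for any $\pf\in\CK$ (and symmetrically for $\rev\pf\in\CK$, since $\deg\rev\pf$ differs from $\deg\pf$ only when $\pf_0=0$, a case to be handled separately).

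First I would assume $\pf\in\CK$ and show $\rev\pf\in\CK$. By Lemma~\ref{lem:p_div_deg_and_ord}, $p\mid d$, so $a^d = (a^{d/p})^p\in\unpow$ for every nonzero $a\in\fieldK$. For any nonzero $a\in\fieldK$, the element $1/a$ again lies in $\fieldK$, hence $\pf(1/a)\in\pow$ by hypothesis; multiplying by the $p$-th power $a^d$ keeps us inside $\pow$, so $\rev\pf(a) = a^d\pf(1/a)\in\pow$. This covers all nonzero rational arguments. For $a = 0$ one simply notes that $\rev\pf(0) = \pf_d$, the leading coefficient of $\pf$, and Lemma~\ref{lem:p_div_deg_and_ord} tells us $\ordp\pf_d$ is divisible by $p$; combined with the fact that $\pf_d$, being a value $\pf(\unif^k)$-style leading term, lies in the relevant $p$-th power class—or more directly, one can invoke continuity and approach $0$ through nonzero arguments exactly as in the proof of Proposition~\ref{prop:CK_eq_CKp}. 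The cleanest route is the latter: since $\rev\pf(a)\in\pow$ for all nonzero $a\in\fieldK$ and $\fieldK$ is dense in $\fieldKp$, a limiting argument via Corollary~\ref{cor:equal_classes_suff} forces $\rev\pf(0)\in\pow$ as well.

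The converse follows by symmetry once one observes that the reciprocal operation is almost an involution: if $\pf_0\neq 0$ then $\rev(\rev\pf) = \pf$, so $\rev\pf\in\CK$ immediately gives $\pf = \rev(\rev\pf)\in\CK$ by the direction just proved. If $\pf_0 = 0$, write $\pf = x^m\pg$ with $\pg(0)\neq 0$; then $\pf(a)\in\pow$ for all $a$ is equivalent to $a^m\pg(a)\in\pow$ for all nonzero $a$, and since $\rev\pf = \rev\pg$ (up to the same degree bookkeeping), one reduces to the case $\pf_0\neq 0$ after checking that $p\mid m$—which again follows from Lemma~\ref{lem:p_div_deg_and_ord} applied suitably, or can be sidestepped by treating $\pf$ and $x^m$ separately since $x\in\CK$ forces... actually $x\notin\CK$, so the honest argument is that $\pf\in\CK$ implies $p\mid m$ by evaluating at uniformizer powers.

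I expect the main obstacle to be the bookkeeping around $\pf_0 = 0$ (equivalently, when $\deg\rev\pf < \deg\pf$), since then $\rev$ is not a strict involution and the degree hypotheses of Lemma~\ref{lem:p_div_deg_and_ord} must be invoked for a lower-degree polynomial. The rest is a routine substitution argument, and the behavior at the single point $a=0$ is handled uniformly by the density-and-continuity technique already established in the proof of Proposition~\ref{prop:CK_eq_CKp}.
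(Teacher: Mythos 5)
Your forward direction is exactly the paper's argument: for nonzero $a$ you use $(\rev\pf)(a)=a^{d}\cdot\pf(1/a)$ together with the divisibility $p\mid d$ supplied by Lemma~\ref{lem:p_div_deg_and_ord}, and then treat $a=0$ separately. For the value at $0$ the paper evaluates $\rev\pf$ at $\unif^{m}$ for $m$ large, writes the result as $c^{p}$, and deduces via Proposition~\ref{prop:local_power_thm} that the unit part of $\pf_d$ is a $p$-th power; your density--plus--Corollary~\ref{cor:equal_classes_suff} route is the same idea packaged as in the proof of Proposition~\ref{prop:CK_eq_CKp}, and it is equally valid since $(\rev\pf)(0)=\pf_d\neq 0$, so the corollary applies to the limit. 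That half of your proposal is complete and matches the paper in substance.

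The genuine problem is the converse, and you have correctly smelled it but not repaired it. When $\pf_0=0$ the map $\rev$ is not an involution: $\rev(\rev\pf)=\pf/x^{m}$, where $m$ is the multiplicity of $0$ as a root of $\pf$. Your proposed fix is to verify $p\mid m$, and you justify this by ``$\pf\in\CK$ implies $p\mid m$ by evaluating at uniformizer powers''---but in the converse direction $\pf\in\CK$ is the \emph{conclusion}, not a hypothesis, so this is circular. The gap is not cosmetic: for $\pf=x$ one has $\rev\pf=1\in\CK$, while $\pf(\unif)=\unif\notin\pow$, so $x\notin\CK$ and the implication $\rev\pf\in\CK\Rightarrow\pf\in\CK$ fails outright without an assumption such as $\pf_0\neq 0$. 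In fairness, the paper's own proof dismisses this half with ``it suffices to prove one implication,'' which is legitimate only when $\rev$ acts as an involution (i.e.\ $\pf_0\neq 0$); only the forward implication is actually used later, in Proposition~\ref{prop:CK_iff_F_and_revF}. So: your forward half is correct and essentially identical to the paper's, while your converse half identifies the right obstruction but does not---and, as the statement stands, cannot---close it.
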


\begin{proof}
It suffices to prove one implication. Assume that $\pf\in \CK$, and set $d = \deg\pf$. For any nonzero element $a \in \fieldK$ we have
\[
(\rev\pf)(a) = a^d \cdot \pf\Bigl(\frac{1}{a}\Bigr).
\]
Since $\pf\in \CK$, we know $\pf(\sfrac{1}{a}) \in \pow$. By the previous lemma, $p$ divides~$d$, so $a^d \in \pow$ as well. Hence, $(\rev\pf)(a)$ is a $p$-th power in~$\fieldKp$ for all nonzero~$a$.

It remains to show that $\pf_d(a)$ is a local $p$-th power also when $a$ is zero. Here $(\rev\pf)(0) = \pf_d$ is the leading coefficient of~$\pf$. By Lemma~\ref{lem:p_div_deg_and_ord}, $\ordp(\pf_d)$ is divisible by~$p$, so we may write
\[
\pf_d = \unif^{kp} \cdot u
\]
for some $k \geq 0$ and some $\idealP$-adic unit $u \in \fieldK$. Evaluate $\rev\pf$ at $\unif^m$ for an integer $m > kp + \sfrac{ep}{p-1}$. The previous part of the proof asserts that it is some $p$-th power in~$\fieldKp$. Say,
\[
(\rev\pf)(\unif^m) = c^p \qquad \text{for some } c \in \fieldKp.
\]
Thus, we have
\[
c^p = \unif^{kp}\cdot u + \pf_{d-1}\unif^m + \dotsb + \pf_0 \unif^{dm}.
\]
It follows that $u$ is congruent to some $p$-th power modulo $\idealP^n$, with $n = m - kp > \sfrac{ep}{(p-1)}$. By Proposition~\ref{prop:local_power_thm}, $u$ is itself a $p$-th power, and therefore so is $\pf_d$. We conclude that $\rev\pf \in \CK$, completing the proof.
\end{proof}

It is clear that the constant term of any polynomial from $\CK$ must be a $p$-th power in~$\fieldKp$. So is its leading coefficient in view of the previous lemma.

\begin{cor}\label{cor:const_and_lc_in_pow}
If $\pf\in \CK$, then both its constant term and its leading coefficient belong to~$\pow$.
\end{cor}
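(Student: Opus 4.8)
The plan is to derive both assertions directly from the definition of $\CK$, using Lemma~\ref{lem:\pf_if\pf_revf} to handle the leading coefficient.

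First I would address the constant term. Since $0 \in \fieldK$ and $\pf \in \CK = \setC(\fieldK)$, the value $\pf(0)$ must be a $p$-th power in $\fieldKp$. But $\pf(0)$ is exactly the constant term of $\pf$, so that term lies in $\pow$. (If the constant term happens to vanish, this is still fine, since $0 = 0^p \in \pow$.)

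For the leading coefficient I would invoke Lemma~\ref{lem:\pf_if\pf_revf}, which tells us that $\rev\pf$ also belongs to $\CK$. Applying the observation above to $\rev\pf$ shows that its constant term lies in $\pow$; since the constant term of $\rev\pf$ coincides with the leading coefficient of $\pf$, the second claim follows.

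I do not anticipate any genuine obstacle here: the statement is a formal consequence of the definition of $\CK$ together with the reciprocal-invariance already established in the previous lemma. The only point requiring a word of care is phrasing the constant-term argument so that it also covers the degenerate case of a vanishing constant term, which as noted is trivial.
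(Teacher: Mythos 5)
Your proof is correct and matches the paper's reasoning exactly: the paper likewise observes that the constant term is $\pf(0)\in\pow$ since $0\in\fieldK$, and obtains the leading coefficient as the constant term of $\rev\pf$, which lies in $\CK$ by the preceding lemma. No issues.
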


The next proposition establishes a relation between $\CK$ and $\CZ$.

\begin{prop}\label{prop:CK_iff_F_and_revF}
For any polynomial $\pf\in \polyring$, we have
\[
\pf\in \CK \iff \pf,\; \rev\pf \in \CZ.
\]
\end{prop}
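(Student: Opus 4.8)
The plan is to prove the two implications separately. The forward direction is essentially immediate: if $\pf \in \CK$, then in particular $\pf(a) \in \pow$ for every $a \in \intring \subseteq \fieldK$, so $\pf \in \CZ$; and by Lemma~\ref{lem:\pf_if\pf_revf} we also have $\rev\pf \in \CK$, hence $\rev\pf \in \CZ$ by the same restriction argument. So the real content is the reverse implication: assuming $\pf, \rev\pf \in \CZ$, we must show that $\pf(a) \in \pow$ for \emph{every} $a \in \fieldK$, not merely for integral $a$.

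For the reverse direction I would fix an arbitrary $a \in \fieldK$ and split into cases according to the valuation $\ordp(a)$. If $\ordp(a) \geq 0$, then $a$ lies in the local ring $\valring \cap \fieldK$; since $\CZ = \COp$ by Proposition~\ref{prop:CK_eq_CKp}, the hypothesis $\pf \in \CZ$ gives $\pf(a) \in \pow$ directly. (Alternatively, one can approximate $a$ $\idealP$-adically by elements of $\intring$ and invoke Corollary~\ref{cor:equal_classes_suff}, exactly as in the proof of Proposition~\ref{prop:CK_eq_CKp}, but quoting $\CZ = \COp$ is cleaner.) The remaining case is $\ordp(a) < 0$. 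Here the idea is to pass to the reciprocal: set $b := 1/a$, so that $\ordp(b) > 0$ and in particular $b \in \valring \cap \fieldK$. With $d := \deg \pf$ we have the identity
\[
\pf(a) = \pf\Bigl(\frac{1}{b}\Bigr) = b^{-d} \cdot (\rev\pf)(b).
\]
Since $\rev\pf \in \CZ = \COp$ and $b$ has nonnegative valuation, $(\rev\pf)(b) \in \pow$. It remains to check that $b^{-d} \in \pow$, i.e.\ that $p \mid d$. This is where I expect the only real subtlety: we need $p \mid \deg \pf$, which is precisely the conclusion of Lemma~\ref{lem:p_div_deg_and_ord}—but that lemma is stated for $\pf \in \CK$, which is what we are trying to prove. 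The resolution is that the proof of Lemma~\ref{lem:p_div_deg_and_ord} only ever evaluates $\pf$ at integral points of the form $\unif^k$ with $k \in \ZZ$; when $k \geq 0$ these lie in $\intring$, and when $k < 0$ one can instead apply the divisibility-of-valuation argument to $\rev\pf$ at $\unif^{-k} \in \intring$, using that $\ordp$ of the leading coefficient of $\rev\pf$ equals $\ordp(\pf_0)$, which is a multiple of $p$ since $\pf_0 = \pf(0) \in \pow$ as $0 \in \intring$. So from $\pf, \rev\pf \in \CZ$ alone one still obtains $p \mid d$, and the argument closes.

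The main obstacle, then, is bookkeeping rather than a deep idea: one must be careful that every invocation of the earlier lemmas (which are phrased for membership in $\CK$) is actually justified by the weaker hypothesis $\pf, \rev\pf \in \CZ$, and in particular re-derive $p \mid \deg\pf$ using only evaluations at integral points. Once that is in place, the case split on $\ordp(a)$ together with the reciprocal identity and Proposition~\ref{prop:CK_eq_CKp} finishes the proof; combining both directions and then applying Lemma~\ref{lem:\pf_if\pf_revf} once more (to note the condition is symmetric in $\pf$ and $\rev\pf$, as it should be) completes the argument.
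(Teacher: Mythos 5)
Your overall structure coincides with the paper's: the forward direction via Lemma~\ref{lem:\pf_if\pf_revf} and the inclusion $\CK\subseteq\CZ$, and the converse via the case split $a\in\valring$ versus $a^{-1}\in\valring$ together with the reciprocal identity. You are also right that the entire weight of the converse rests on knowing $p\mid d$, and you correctly notice that Lemma~\ref{lem:p_div_deg_and_ord} is stated under the hypothesis $\pf\in\CK$, which is what is being proved (the paper simply cites that lemma at this point, so it glosses over the very issue you flagged).

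However, your proposed repair of this step fails: evaluations at powers of the uniformizer cannot detect $d\bmod p$ under the hypotheses $\pf,\rev\pf\in\CZ$. For $k<0$ one has $\pf(\unif^k)=\unif^{kd}\cdot(\rev\pf)(\unif^{-k})$, and the hypothesis only guarantees that the second factor lies in $\pow$; hence $\ordp\pf(\unif^k)\equiv kd\pmod p$ automatically, whatever $d$ is. Comparing this with the dominant-term computation $\ordp\pf(\unif^k)=kd+\ordp\pf_d$ yields only $p\mid\ordp\pf_d$ (and, symmetrically, $p\mid\ordp\pf_0$), never $p\mid d$; the contradiction in the proof of Lemma~\ref{lem:p_div_deg_and_ord} needs $\pf(\unif^k)\in\pow$ for a \emph{negative} exponent $k$, which is exactly what the $\CZ$ hypotheses do not provide. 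To extract $p\mid d$ you must probe with units rather than with uniformizer powers: for every unit $u\in\valring^\times$ that is not a root of $\pf$, the identity $u^{d}=\pf(u)\cdot\bigl((\rev\pf)(u^{-1})\bigr)^{-1}$ exhibits $u^d$ as a quotient of two elements of $\unpow$, so $u^d\in\unpow$. If $p\nmid d$, the $d$-th power map is an automorphism of the nontrivial elementary abelian $p$-group $\quo{\valring^\times}{(\valring^\times\cap\unpow)}$, so this would force $u\in\unpow$ for all but finitely many units; but $\valring^\times\setminus\unpow$ is a nonempty union of open cosets, hence infinite---a contradiction. With this substitute for your derivation of $p\mid\deg\pf$, the rest of your argument closes as written.
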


\begin{proof}
If $\pf\in \CK$ then also $\rev \pf\in \CK$ by Lemma~\ref{lem:\pf_if\pf_revf}. Hence, one implication follows immediately from the fact that $\CK \subseteq \CZ$.

For the converse, assume that both~$\pf$ and $\rev\pf$ are in $\CZ$. Fix any element $a\in \fieldKp$. As $\valring$ is the valuation ring of~$\fieldKp$, either $a \in \valring$ or $a^{-1} \in \valring$. If $a\in \valring$ then $\pf(a)\in \pow$ since $\pf\in \CZ = \COp$. 

On the other hand, if $a^{-1}\in \valring$, then we have 
\[
\pf(a) = a^{-d}\cdot (\rev\pf)(a),
\]
where $d = \deg\pf$. Since $\rev\pf \in \CZ = \COp$, we know $(\rev\pf)(a) \in \pow$. Moreover, Lemma~\ref{lem:p_div_deg_and_ord} implies that $p$ divides the degree of~$\pf$, so $a^{-d}$ is itself a $p$-th power in~$\fieldKp$. Thus, $\pf(a) \in \pow$, as well.

In either case, $\pf(a)$ is a $p$-th power, and since $a$ was arbitrary, $\pf\in \CK$. This completes the proof.
\end{proof}

As announced at the beginning of this section, we now show that the inclusion $\CK \subset \CZ$ is strict.

\begin{obs}
The set~$\CK$ is a proper subset of~$\CZ$.
\end{obs}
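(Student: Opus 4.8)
The plan is to produce an explicit polynomial lying in~$\CZ$ but not in~$\CK$; since the inclusion $\CK \subseteq \CZ$ has already been observed, this is enough. The conceptual point is that, by Lemma~\ref{lem:p_div_deg_and_ord}, membership in~$\CK$ forces the degree to be a multiple of~$p$, whereas nothing of the sort is required for membership in~$\CZ$: on the bounded set~$\intring$ one can absorb the entire non-constant part of a polynomial into an arbitrarily small neighbourhood of~$1$, where the local power theorem guarantees that every element is a $p$-th power.

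Concretely, I would fix an integer $N > \frac{ep}{p-1}$ (for instance $N = \bigl\lceil \frac{ep}{p-1} \bigr\rceil + 1$) and take the linear polynomial $\pf := 1 + \unif^N x \in \polyring$. First I would check that $\pf \in \CZ$: for every $a \in \intring$ one has $\ordp(\unif^N a) \geq N$, hence $\pf(a) \in 1 + \idealP^N$, and Proposition~\ref{prop:local_power_thm} gives $1 + \idealP^N \subseteq \unpow \subseteq \pow$. Then I would observe that $\deg\pf = 1$ is not divisible by the prime~$p$, so Lemma~\ref{lem:p_div_deg_and_ord} immediately yields $\pf \notin \CK$. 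Together these two facts show $\CK \subsetneq \CZ$.

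I do not expect a genuine obstacle here: once the witness is chosen, both verifications are single-line applications of results established above. The only thing requiring a little care is to pick the exponent~$N$ to be an honest integer strictly larger than $\frac{ep}{p-1}$, so that Proposition~\ref{prop:local_power_thm} applies as stated. If one prefers not to single out a linear polynomial, the same argument works with $\pf = 1 + \unif^N x^m$ for any~$m$ coprime to~$p$; alternatively, one could build some $\pf \in \CZ$ whose reciprocal $\rev\pf$ fails to lie in~$\CZ$ and invoke Proposition~\ref{prop:CK_iff_F_and_revF}, but the linear example is the shortest route.
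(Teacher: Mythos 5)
Your proposal is correct and coincides with the paper's own argument: the paper also takes the witness $\pf = 1 + \unif^m x$ with $m > \sfrac{ep}{(p-1)}$, verifies $\pf\in\CZ$ via Proposition~\ref{prop:local_power_thm}, and excludes $\pf$ from $\CK$ via Lemma~\ref{lem:p_div_deg_and_ord}. The only cosmetic difference is that the paper additionally imposes $m\equiv 1\pmod p$ so that the valuation of the leading coefficient also violates the lemma, whereas you correctly note that the degree condition alone already suffices.
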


\begin{proof}
Fix an integer~$m$ such that 
\[
m\equiv 1\pmod{p}
\qquad\text{and}\qquad
m > \frac{ep}{p - 1}.
\]
Consider the polynomial $\pf := 1 + \unif^m\cdot x$. Then $p$ divides neither the degree of~$\pf$ nor the valuation of its leading coefficient. It follows from Lemma~\ref{lem:p_div_deg_and_ord} that $\pf \notin \CK$.

On the other hand, for every $a \in \intring$ we have
\[
\pf(a) \in 1 + \idealP^m \subseteq \unpow.
\]
Hence, $\pf\in \CZ$ by Proposition~\ref{prop:local_power_thm}. This proves that $\CK$ is a proper subset of~$\CZ$.
\end{proof}

\begin{rem}
As a by-product of the above proof, we see that Lemmas~\ref{lem:p_div_deg_and_ord}--\ref{lem:\pf_if\pf_revf} and Corollary~\ref{cor:const_and_lc_in_pow} do not generalize to the class~$\CZ$.
\end{rem}

\section{Decidability of the class $\CKp$}\label{sec:decidability}
The condition that a polynomial~$\pf$ takes $p$-th power values on the whole ring~$\valring$ or the whole field~$\fieldKp$ (i.e., that it belongs to one of the classes $\COp$ or $\CKp$) involves infinitely many, in fact uncountably many, conditions:
\begin{equation}\label{eq:quantified_conditions}
\forall_{a\in \valring}\exists_{b\in\fieldKp} \pf(a) = b^p 
\end{equation}
and analogously for~$\fieldKp$. Proposition~\ref{prop:CK_eq_CKp} from the previous section allows us to reduce the problem to a still infinite but countable family of conditions. On the other hand, \eqref{eq:quantified_conditions} is a first-order formula, and it is known (see, e.g., \cite{AK1966, Kartas2024, Koenigsmann2018, Kruckman2013, Macintyre1976}) that the first-order theory of local fields is decidable. Hence, there must exist a finite family of conditions determining whether a given polynomial belongs to~$\COp$ (resp.~to~$\CKp$). It is possible---at least in theory---to obtain such a family by applying general quantifier-elimination techniques for local fields.. Nonetheless, in this section, we present a simple, explicit algorithmic solution tailored specifically to our setting.

Fix a nonzero polynomial $\pf\in \polyring$. By performing the square-free factorization (see, e.g., \cite[\S 14.6]{GG2013}), we can write
\[
\pf = \lc(\pf)\cdot \pg_1\cdot \pg_2^2\dotsm \pg_k^k,
\]
where $\lc(\pf) \in \intring$ is the leading coefficient of~$\pf$, and $\pg_1, \dotsc, \pg_k$ are monic, square-free polynomials. In general, the coefficients of the $\pg_i$ may lie in~$\fieldK$ rather than~$\intring$. However, every coefficient of each $\pg_i$ can be expressed as $\sfrac{a_{ij}}{c_{ij}}$ with $a_{ij}\in \intring$ and $c_{ij}\in \ZZ$. Multiplying each $\pg_i$ by the $p$-th power of the least common multiple of the denominators, we obtain
\[
c^p\cdot \pf = \lc(\pf)\cdot \tilde\pg_1\cdot \tilde\pg_2^2\dotsm \tilde\pg_k^k,
\]
where the polynomials $\tilde\pg_1, \dotsc, \tilde\pg_k$ now have coefficients in~$\intring$, and $c \in \ZZ$. It follows that $\pf\in \CZ$ if and only if $\pf_*\in \CZ$, where
\[
\pf_* := \lc(\pf) \cdot \prod_{i=1}^k \tilde\pg_i^{i \bmod p}.
\]
Hence, in our analysis, we may restrict to polynomials not divisible by $p$-th powers of other polynomials.

First, we shall deal with linear factors of the polynomials in question.

\begin{lem}\label{lem:root_multiplicity}
Let $\pf \in \polyring$ and suppose $\pf \in \CKp$.  If $\xi \in \fieldKp$ is a root of~$\pf$, then $p$ divides its multiplicity. In particular, if $\pf$ is not divisible by a $p$-th power of another polynomial, then it cannot have roots in~$\fieldKp$.
\end{lem}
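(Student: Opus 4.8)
The plan is to suppose $\xi \in \fieldKp$ is a root of $\pf$ of multiplicity $m$, write $\pf(x) = (x - \xi)^m \cdot \pg(x)$ with $\pg(\xi) \neq 0$, and derive a contradiction from $p \nmid m$ by evaluating $\pf$ at points very close to $\xi$. First I would choose an integer $k$ large enough that $\pg$ does not vanish on the ball $\xi + \idealP^k$ and such that $\pg$ is ``locally constant modulo $p$-th powers'' there; concretely, by Corollary~\ref{cor:equal_classes_suff} there is a $k_0$ so that for all $a \in \xi + \idealP^{k_0}$ the value $\pg(a)$ lies in the fixed coset $\pg(\xi)\unpow$ (here I use that $\pg(\xi) \neq 0$, so $\ordp \pg(a) = \ordp \pg(\xi)$ is constant on a small enough ball, and congruence of arguments modulo a high power of $\idealP$ forces congruence of values modulo a high power of $\idealP$).

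Next I would exploit the freedom in the first factor: for $a = \xi + \unif^t$ with $t \geq k_0$, we have $(a - \xi)^m = \unif^{tm}$, so
\[
\pf(a) = \unif^{tm}\cdot \pg(\xi + \unif^t) \in \unif^{tm}\cdot \pg(\xi)\unpow.
\]
Since $\pf \in \CKp$, the left-hand side is a $p$-th power, hence $\unif^{tm}\cdot \pg(\xi) \in \unpow$, i.e. the coset $\unif^{tm}\unpow$ equals $\pg(\xi)^{-1}\unpow$. Now run this for two admissible values $t$ and $t+1$ (both $\geq k_0$): subtracting the resulting coset identities gives $\unif^{m}\unpow = \unpow$, that is, $\unif^m \in \unpow$. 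But $\ordp(\unif^m) = m$ is not divisible by $p$, so $\unif^m$ cannot be a $p$-th power in $\fieldKp$ — contradiction. Therefore $p \mid m$.

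For the ``in particular'' clause: if $\pf$ is not divisible by the $p$-th power of any polynomial in $\polyring$, then in particular no linear factor $(x - \xi)$ with $\xi \in \fieldKp$ can occur with multiplicity $\geq p$; combined with the first part (every root has multiplicity divisible by $p$, hence $\geq p$), this forces $\pf$ to have no root in $\fieldKp$ at all. One subtlety to mention here is that a root $\xi \in \fieldKp$ need not lie in $\fieldK$, so $(x-\xi)$ is a factor only over $\fieldKp[x]$, not over $\polyring$; but divisibility of $\pf$ by $(x-\xi)^p$ in $\fieldKp[x]$ already contradicts the square-free-up-to-$p$-th-powers hypothesis after clearing denominators, or more simply we just invoke the first part directly, since it already shows $m \geq p$ whenever $\xi$ is a root.

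The main obstacle I anticipate is the bookkeeping in the first step: making sure that a single choice of $k_0$ simultaneously keeps $\pg$ away from zero and pins down its $p$-th power class on the whole ball $\xi + \idealP^{k_0}$, uniformly enough that the two evaluations at $\unif^t$ and $\unif^{t+1}$ land in that ball and in the \emph{same} coset of $\pg(\xi)$. This is exactly what Corollary~\ref{cor:equal_classes_suff} is designed to supply, so once the threshold $k_0 > \ordp\pg(\xi) + \tfrac{ep}{p-1}$ (and large enough that $\pg$ is nonvanishing, which holds automatically once values stay in a fixed nonzero coset) is fixed, the rest is the short coset computation above.
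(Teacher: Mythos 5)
Your argument is correct and follows essentially the same route as the paper: factor $\pf = (x-\xi)^m\pg$, evaluate at points $\xi + \unif^t$ near the root, and use $\gcd(m,p)=1$ to force a value that cannot lie in $\pow$. The only (cosmetic) difference is that the paper picks a single exponent $w$ with $mw \equiv 1 - v \pmod{p}$ so that $\ordp \pf(a) \equiv 1 \pmod{p}$ directly, whereas you compare two consecutive exponents and divide the resulting coset identities to conclude $\unif^m \in \unpow$, which is impossible for the same valuation-theoretic reason.
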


\begin{proof}
Write $\pf = (x - \xi)^m \cdot \pg$, where $\pg \in \fieldKp[x]$ is a polynomial not vanishing at~$\xi$. Set $v := \ordp \pg(\xi)$. Suppose the multiplicity~$m$ is not divisible by~$p$. In particular, $\gcd(m, p) = 1$ since $p$ is a prime. Hence, there is an integer $w > v$ such that 
\[
mw \equiv 1 - v \pmod{p}.
\]
Choose $a \in (\xi + \idealP^w) \setminus (\xi + \idealP^{w+1})$, so $a = \xi + u \unif^w$ with some $\idealP$-unit $u \in \fieldKp$. Applying the binomial expansion, we can write $\pg(a) = \pg(\xi) + \unif^w\cdot c$ for some $c\in \valring$. Since $w > v$, we have $\ordp \pg(a) = v$. Therefore,
\[
\ordp \pf(a) 
= \ordp (a - \xi)^m + \ordp \pg(a)
= m\cdot w + v
\equiv 1\pmod{p}. 
\]
Thus, $\pf(a)$ cannot be a $p$-th power, contradicting $\pf \in \CKp$.
\end{proof}

The following result---in the spirit of Hensel's lemma---is likely known to experts in the field. However, lacking a convenient reference, we feel obliged to formulate and prove it explicitly.

\begin{lem}\label{lem:roots_in_all_quo}
Let $\pf\in \polyring$ be a polynomial. If for every $n\geq 1$ the polynomial~$\pf$ has a root in $\quo{\intring}{\idealP^n}$, then $\pf$ has a root in~$\valring$.
\end{lem}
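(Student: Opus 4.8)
The plan is to build a $\idealP$-adically convergent sequence of approximate roots and invoke completeness of $\valring$. First I would note that it suffices to produce, for each $n$, an element $a_n \in \valring$ with $\ordp \pf(a_n) \geq n$, chosen so that the sequence $(a_n)$ is Cauchy; its limit $a = \lim a_n$ then satisfies $\pf(a) = 0$ by continuity of $\pf$. The hypothesis gives, for every $n$, some $b_n \in \intring$ whose reduction modulo $\idealP^n$ is a root of $\pf$ in $\quo{\intring}{\idealP^n}$, i.e. $\ordp \pf(b_n) \geq n$. The obstacle is that these $b_n$ need not be compatible across different $n$, so the raw sequence $(b_n)$ need not converge — one must extract or refine a genuinely Cauchy sequence.

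The cleanest way to handle this is a compactness/tree argument. For each $n$, let $R_n \subseteq \quo{\intring}{\idealP^n}$ be the (finite, nonempty by hypothesis) set of roots of $\pf$ mod $\idealP^n$. The natural reduction maps $\quo{\intring}{\idealP^{n+1}} \to \quo{\intring}{\idealP^n}$ carry $R_{n+1}$ into $R_n$, since a root mod $\idealP^{n+1}$ is a fortiori a root mod $\idealP^n$. Thus $(R_n)_{n\geq 1}$ forms an inverse system of finite nonempty sets with the transition maps between them; by the standard fact that an inverse limit of finite nonempty sets along surjections-onto-image is nonempty (equivalently, König's lemma applied to the tree whose level-$n$ nodes are the elements of the image of $R_{n'}$ in $R_n$ for all $n' \geq n$, which stabilizes to a nonempty finite set), there is a compatible thread $(\bar a_n)_n$ with $\bar a_n \in R_n$ and $\bar a_{n+1} \mapsto \bar a_n$ under reduction. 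Lifting this thread gives $a \in \varprojlim \quo{\intring}{\idealP^n} = \valring$ with $\ordp \pf(a) \geq n$ for all $n$, hence $\pf(a) = 0$.

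The main obstacle, and the only subtle point, is precisely the non-emptiness of the inverse limit: the transition maps $R_{n+1} \to R_n$ need not be surjective, so one cannot just thread naively. The fix is to pass to the \emph{stable images}: set $R_n^{\mathrm{st}} := \bigcap_{n' \geq n} \mathrm{image}(R_{n'} \to R_n)$, a decreasing intersection of nonempty finite sets inside the finite set $R_n$, hence nonempty, and the transition maps $R_{n+1}^{\mathrm{st}} \to R_n^{\mathrm{st}}$ are surjective by construction; now a thread exists by a routine diagonal choice. I would present this either via the explicit stable-image argument or by a one-line appeal to König's lemma, noting that the tree of "partial roots'' $\{(a_1,\dots,a_n) : a_j \in R_j,\ a_{j+1} \equiv a_j \bmod \idealP^j\}$ is infinite (it has nodes at every level, since each $b_{n}$ furnishes a length-$n$ branch $(b_n \bmod \idealP, \dots, b_n \bmod \idealP^n)$) and finitely branching, hence has an infinite branch.
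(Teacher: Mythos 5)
Your argument is correct, and it takes a genuinely different (though closely related) route from the paper. The paper argues topologically: since $\pf$ is continuous and each $\idealP^n$ is closed, the sets $\pf^{-1}(\idealP^n)$ form a descending chain of closed subsets of the compact ring $\valring$; each is nonempty because a lift of a root modulo $\idealP^n$ lands in $\pf^{-1}(\idealP^n)$; hence the intersection is nonempty, and any point of it is a root. You instead work combinatorially with the inverse system of finite nonempty root sets $R_n \subseteq \quo{\intring}{\idealP^n}$ and extract a compatible thread via stable images (equivalently K\"onig's lemma), then identify $\valring$ with $\varprojlim \quo{\intring}{\idealP^n}$. You correctly flag and repair the one subtle point---the transition maps $R_{n+1}\to R_n$ need not be surjective, so one must pass to stable images before threading---which is exactly the issue that the paper's appeal to compactness (the finite intersection property) sidesteps. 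What the paper's version buys is brevity: three lines once compactness of $\valring$ is granted. What your version buys is that it only uses finiteness of the residue rings and completeness, making the mechanism fully explicit without invoking topological compactness; it also makes clear why the naive Cauchy-sequence attempt in your first paragraph fails and how the tree structure rescues it. Either proof is acceptable; yours is essentially the standard proof that an inverse limit of finite nonempty sets is nonempty, specialized to this setting.
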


\begin{proof}
Polynomials are continuous with respect to the $\idealP$-adic topology, and each $\idealP^n$ is a closed subset of~$\valring$. Hence,
\begin{equation}\label{eq:chain}
\pf^{-1}(\idealP) \supset \pf^{-1}(\idealP^2) \supset \dotsb 
\supset \pf^{-1}(\idealP^n) \supset \dotsb
\end{equation}
is a descending chain of closed subsets of~$\valring$. By assumption, for every $n$ there exists $a_n \in \quo{\intring}{\idealP^n}$ with $\pf(a_n) = 0$. Choose a lift $b_n\in \valring$ of~$a_n$. Then $\pf(b_n)\in \idealP^n$, so $\pf^{-1}(\idealP^n)$ is nonempty. Consequently, \eqref{eq:chain} is a descending chain of nonempty closed subsets of the compact set~$\valring$. Thus, it has a nonempty intersection. Pick $a \in \bigcap_{n \geq 1} \pf^{-1}(\idealP^n)$. Then $\pf(a) \in \idealP^n$ for every exponent $n\geq 1$, which means that $\pf(a) = 0$.
\end{proof}

\begin{prop}\label{prop:decidability_CZ}
Let $\pf \in \polyring$ be a polynomial with no roots in~$\fieldKp$, and assume that $\pf$ is not divisible by a $p$-th power of another polynomial. Then there is a finite, explicitly constructed set $\setA\subset \intring$ \textup(depending on~$\pf$\textup) such that
\[
\pf\in \COp \iff \pf\in \CZ \iff \pf\in \setC(\setA).
\]
\end{prop}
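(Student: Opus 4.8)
The plan is to reduce membership in $\COp$ to finitely many congruence conditions by exploiting the compactness of $\valring$ together with Corollary~\ref{cor:equal_classes_suff}. The two nontrivial equivalences are $\pf\in\COp \Leftrightarrow \pf\in\CZ$ (which follows from Proposition~\ref{prop:CK_eq_CKp}, since $\CZ=\COp$) and the reduction of the uncountably many conditions ``$\pf(a)\in\pow$ for all $a\in\valring$'' to finitely many conditions ``$\pf(a)\in\pow$ for $a$ in a finite set $\setA$.'' So the real content is the construction of $\setA$.

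First I would bound, uniformly in $a\in\valring$, the valuation $\ordp\pf(a)$. Since $\pf$ has no roots in $\fieldKp$ and (by Lemma~\ref{lem:roots_in_all_quo} applied to $\pf$, or rather its contrapositive) has no root in $\valring$, the function $a\mapsto\ordp\pf(a)$ is bounded above on the compact set $\valring$: indeed the sets $\pf^{-1}(\idealP^n)$ form a descending chain of closed sets with empty intersection, so by compactness some $\pf^{-1}(\idealP^N)$ is already empty. Fix such an $N$; then $\ordp\pf(a)<N$ for every $a\in\valring$. (One can make $N$ explicit: either run through $n=1,2,\dots$ testing whether $\pf$ has a root in $\quo{\intring}{\idealP^n}$ until the first $n$ with none, or bound $N$ via the discriminant/resultant of $\pf$.) Now set
\[
k := N + \left\lfloor\frac{ep}{p-1}\right\rfloor + 1,
\]
and let $\setA\subset\intring$ be a complete set of representatives for $\quo{\intring}{\idealP^k}$; this is a finite set, explicitly constructible, and it depends on $\pf$ only through the integer $N$.

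The core step is then the claim that $\pf\in\setC(\setA)$ implies $\pf\in\COp$. Take an arbitrary $a\in\valring$ and let $a'\in\setA$ be the representative with $a\equiv a'\pmod{\idealP^k}$. Since polynomials are $\idealP$-adically continuous and in fact $\ordp(\pf(a)-\pf(a'))\geq k$ (because $\intring[x]$-polynomials satisfy $\ordp(\pf(a)-\pf(a'))\geq\ordp(a-a')$), and since $\ordp\pf(a)<N$ and likewise $\ordp\pf(a')<N$ (so in particular $\pf(a),\pf(a')\neq 0$ and they have equal valuation $v=\ordp\pf(a)=\ordp\pf(a')<N$, as $k>v$), Corollary~\ref{cor:equal_classes_suff} gives $\pf(a)\unpow=\pf(a')\unpow$. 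By hypothesis $\pf(a')\in\pow$, hence $\pf(a)\in\pow$. As $a$ was arbitrary, $\pf\in\COp$. The reverse implications $\pf\in\COp\Rightarrow\pf\in\CZ\Rightarrow\pf\in\setC(\setA)$ are trivial from the inclusions $\COp=\CZ\subseteq\setC(\setA)$ (the first by Proposition~\ref{prop:CK_eq_CKp}, the second because $\setA\subseteq\intring$).

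The main obstacle is making the bound $N$ genuinely effective rather than merely existential: the compactness argument yields existence of $N$ but not a formula for it. I expect to handle this by the explicit root-testing procedure (check for a root modulo $\idealP,\idealP^2,\dots$; by Lemma~\ref{lem:roots_in_all_quo} the process must terminate since $\pf$ has no root in $\valring$), or alternatively by noting that if $\ordp\pf(a)$ were large then $a$ would be $\idealP$-adically close to a root of $\pf$ in $\overline{\fieldKp}$, and quantifying this via the discriminant of $\pf$ gives an a priori bound on $N$. Everything else—the finiteness of $\setA$, the application of Corollary~\ref{cor:equal_classes_suff}, and the chain of equivalences—is routine once $N$ is fixed.
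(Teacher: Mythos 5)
Your proposal is correct and follows essentially the same route as the paper: the paper's Algorithm~\ref{alg:text_CZ} also takes $\setA$ to be a system of representatives of $\quo{\intring}{\idealP^{m+M}}$ with $M=\lfloor ep/(p-1)\rfloor+1$ and $m$ a bound on $\ordp\pf(a)$, proves termination via Lemma~\ref{lem:roots_in_all_quo} exactly as you bound $N$, and concludes with Corollary~\ref{cor:equal_classes_suff}. The only difference is organizational—the paper updates the bound $m$ adaptively inside the loop rather than precomputing $N$ first, and it later makes the bound explicit via the Krasner constant (Lemma~\ref{lem:bound_ord_Fa}) rather than the discriminant.
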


The equivalence of the first two conditions was already established in Proposition~\ref{prop:CK_eq_CKp}. It remains to prove the existence of a set~$\setA$ for which the other equivalence holds. This is done in the following algorithm.

\begin{alg}\label{alg:text_CZ}
Let $\pf \in \polyring$ be as in Proposition~\ref{prop:decidability_CZ}. This algorithm decides whether $\pf \in \CZ$.
\begin{enumerate}
\item Set $M\gets  \bigl\lfloor \sfrac{ep}{(p-1)}\bigr\rfloor + 1$, and initialize $m\gets 0$. 
\item\label{st:outter_loop} Let $\setA \subset \intring$ be a system of representatives of the quotient ring $\quo{\intring}{\idealP^{m + M}}$. 
\item For each $a \in \setA$, perform the following steps:
    \begin{enumerate}
    \item If $\pf(a) \notin \pow$ \textup(use, e.g., \cite[Algorithm~5]{KN2025}\textup), then output `false' and halt.
    \item\label{st:update_m} If $\ordp \pf(a) > m$, update $m \gets \ordp \pf(a)$ and return to step~\eqref{st:outter_loop}.
    \end{enumerate}
\item Output `true'.
\end{enumerate}
\end{alg}

\begin{poc}
We first prove that the algorithm terminates. To this end, it suffices to show that the test in step~\eqref{st:update_m} can be triggered only finitely many times. A contrario, suppose that this is not the case. Then there exists an infinite sequence $(a_i)_{i \in \NN}$ of elements of~$\intring$ such that the sequence $\bigl(\ordp \pf(a_i)\smash{\bigl)}_{i\in\NN}$ of valuations is strictly increasing (and hence unbounded). Fix any positive integer~$n$, and let~$i$ be an index such that 
\[
\ordp \pf(a_{i-1}) < n \leq \ordp \pf(a_i).
\]
Then the residue class of~$a_i$ is a root of~$\pf$ in the ring $\quo{\intring}{\idealP^n}$.  
This implies that $\pf$ has a root in every quotient ring $\quo{\intring}{\idealP^n}$, and therefore a root in~$\valring \subset \fieldKp$ by Lemma~\ref{lem:roots_in_all_quo}, contradicting the assumption on~$\pf$. This proves that the algorithm indeed terminates.

We now prove the correctness of the output. Clearly, the algorithm can return `false' only when it finds an explicit counterexample to the condition `$\pf \in \CZ$'. It remains to show that the algorithm does not return `true' incorrectly (i.e.\ that it produces no false positives).

Let $m_1, m_2, \dotsc$ denote the successive values of the variable~$m$ during the execution, and let $\setA_1, \setA_2, \dotsc$ be the corresponding sets of representatives. In particular, $\setA_i$ is a system of representatives of $\quo{\intring}{\idealP^{m_i + M}}$. 

Suppose there exists $b \in \intring$ such that $\pf(b) \notin \pow$. Then there exists $a_1 \in \setA_1$ such that 
\[
a_1\equiv b\pmod{\idealP^{m_1 + M}}.
\]
It follows that $\pf(a_1) \equiv \pf(b) \pmod{\idealP^{m_1 + M}}$.  

If $\ordp \pf(b) = m_1 = 0$, then also $\ordp \pf(a_1) = 0$, and Corollary~\ref{cor:equal_classes_suff} implies that $\pf(b) \cdot \unpow = \pf(a_1) \cdot \unpow$. Hence $\pf(a_1) \notin \pow$, and the algorithm correctly outputs `false'.

On the other hand, if $\ordp \pf(b)$ is greater than~$m_1$, then so is $\ordp \pf(a_1)$. In this case $\setA_2$ is a system of representatives of $\quo{\intring}{\idealP^{m_2 + M}}$ with $m_2 = \ordp \pf(a_1)$. Again, $b$ is congruent to some $a_2 \in \setA_2$ modulo $\idealP^{m_2 + M}$. Repeating this process, we eventually find $a_k \in \setA_k$ such that
\[
\pf(b) \equiv \pf(a_k) \pmod{\idealP^{m_k + M}}
\qquad\text{and}\qquad 
\ordp \pf(b) = \ordp \pf(a_k) \leq m_k.
\]
By Corollary~\ref{cor:equal_classes_suff}, the $p$-th power classes $\pf(b)\cdot \unpow$ and $\pf(a_k)\cdot \unpow$ coincide. Hence $\pf(a_k) \notin \pow$, and the algorithm outputs `false'. This shows that the algorithm never returns `true' incorrectly.

Thus, the algorithm terminates and is correct.
\end{poc}

\begin{rem}
Algorithm~\ref{alg:text_CZ}, as presented, is sufficient to establish the existence of~$\setA$. From a computational perspective, however, it is rather suboptimal. In particular, once an element~$a$ with $\ordp \pf(a) > m$ is found, it would be more economical to extend~$\setA$ only by those representatives of $\quo{\intring}{\idealP^{M + \ordp \pf(a)}}$ that are congruent to~$a$ modulo $\idealP^{M+m}$, rather than reconstructing the entire set. Furthermore, an optimized implementation would record which elements have already been tested, thereby avoiding redundant repetitions that may occur in the current version when $\setA$ is updated.
\end{rem}

Algorithm~\ref{alg:text_CZ} constructs the set~$\setA$ explicitly, but it does not yield any clear information about its size. Our next goal is therefore to derive an upper bound for $\card{\setA}$. To this end, we first recall the notion of the Krasner constant.

Let $\pf\in \fieldKp[x]$ be a polynomial of degree $d := \deg \pf \geq 2$, and let $\fieldL\supset \fieldKp$ be a splitting field of~$\pf$. Denote by $\ordp$ the (unique) extension of the valuation from $\fieldKp$ to~$\fieldL$. If $\xi_1, \dotsc, \xi_d\in \fieldL$ are all the roots of~$\pf$, then the \term{Krasner constant} of~$\pf$ is defined as
\[
\kras\pf := \max\bigl\{ \ordp(\xi_u - \xi_j)\st i\neq j \bigr\}.
\]

\begin{lem}\label{lem:bound_ord_Fa}
If a polynomial $\pf\in \polyring$ of degree~$d$ has no roots in~$\fieldKp$, then
\[
\max \bigl\{ \ordp \pf(a) \st a\in \valring\bigr\} <  d\cdot \kras\pf + \ordp \lc(\pf).
\]
\end{lem}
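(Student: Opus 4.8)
The plan is to bound the valuation $\ordp \pf(a)$ for $a \in \valring$ by relating it to distances from $a$ to the roots of $\pf$ in the splitting field $\fieldL$. Write $\pf = \lc(\pf) \cdot (x - \xi_1)\dotsm(x - \xi_d)$ where $\xi_1, \dotsc, \xi_d \in \fieldL$ are all the roots, none of which lies in $\fieldKp$. Then for any $a \in \valring$,
\[
\ordp \pf(a) = \ordp \lc(\pf) + \sum_{j=1}^d \ordp(a - \xi_j),
\]
so the task reduces to bounding $\sum_{j=1}^d \ordp(a - \xi_j)$ by $d \cdot \kras\pf$, or more precisely showing it is strictly less than $d \cdot \kras\pf$.

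First I would handle the case $d = 1$ separately (where the $\max$ over an empty set of root-differences makes $\kras\pf$ undefined, but the lemma hypothesis already excludes a root in $\fieldKp$ — actually for $d=1$ the unique root lies in $\fieldKp$, so the hypothesis is vacuously never satisfied, or one treats $d \geq 2$ as in the definition of $\kras$). For $d \geq 2$, the key observation is the following: among the roots $\xi_1, \dotsc, \xi_d$, let $\xi_{j_0}$ be one closest to $a$, i.e.\ with $\ordp(a - \xi_{j_0})$ maximal. I claim that $\ordp(a - \xi_{j_0}) < \kras\pf$. Indeed, if we had $\ordp(a - \xi_{j_0}) \geq \kras\pf = \max_{i \neq j}\ordp(\xi_i - \xi_j)$, then $\xi_{j_0}$ would be strictly closer to $a$ than to any other root $\xi_i$; by Krasner's lemma this would force $\fieldKp(\xi_{j_0}) \subseteq \fieldKp(a) = \fieldKp$, contradicting the assumption that $\pf$ has no root in $\fieldKp$. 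Hence $\ordp(a - \xi_{j_0}) \leq \kras\pf - 1 < \kras\pf$ if valuations are normalized to $\ZZ$ (or just the strict inequality in general). Then for every $j$, $\ordp(a - \xi_j) \leq \ordp(a - \xi_{j_0}) < \kras\pf$, and summing over the $d$ roots gives $\sum_j \ordp(a - \xi_j) < d \cdot \kras\pf$, which yields the stated bound after adding $\ordp\lc(\pf)$.

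The main obstacle, and the step requiring care, is the application of Krasner's lemma: it is usually stated for elements of a fixed algebraic closure with the valuation extended canonically, and one must make sure the extension of $\ordp$ from $\fieldKp$ to $\fieldL$ is indeed unique (true since $\fieldKp$ is complete / a local field, so any finite extension carries a unique extension of the valuation) so that the "closest root" argument is well-posed. One also needs that $a \in \valring \subseteq \fieldKp$ so that $\fieldKp(a) = \fieldKp$; this is where the hypothesis $a \in \valring$ (rather than $a \in \fieldKp$ arbitrary, which would be the same thing) is used, together with $\pf$ having no root in $\fieldKp$. A minor point is to note that if two roots achieve the same maximal distance to $a$ there is no problem — we only need \emph{some} root $\xi_{j_0}$ attaining the max, and the Krasner argument applies to it. Finally, one should remark that the inequality is strict even when $d \cdot \kras\pf + \ordp\lc(\pf)$ is not an integer issue, since each summand is strictly below $\kras\pf$; this strictness is exactly what later lets us choose the truncation modulus $M$ large enough in the decidability argument.
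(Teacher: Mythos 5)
Your overall approach coincides with the paper's: factor $\pf$ over the splitting field, write $\ordp\pf(a) = \ordp\lc(\pf) + \sum_{j}\ordp(a-\xi_j)$, and use Krasner's lemma together with the hypothesis that $\pf$ has no root in $\fieldKp$ to bound each $\ordp(a-\xi_j)$ by $\kras\pf$. The paper does this root by root (for each $i$, $\ordp(a-\xi_i)\le\max_{j\ne i}\ordp(\xi_j-\xi_i)\le\kras\pf$), whereas you pass through the root closest to~$a$; that difference is cosmetic.

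The step that does not hold up is your derivation of the \emph{strict} inequality. First, Krasner's lemma requires $\ordp(a-\xi_{j_0})$ to be \emph{strictly} greater than the distances from $\xi_{j_0}$ to its conjugates; assuming only $\ordp(a-\xi_{j_0})\ge\kras\pf$ leaves open the boundary case of equality, in which no contradiction arises. Your argument by contradiction therefore yields only $\ordp(a-\xi_{j_0})\le\kras\pf$. Second, the fallback ``$\le\kras\pf-1$ since valuations are normalized to $\ZZ$'' is false: the extension of $\ordp$ to the splitting field $\fieldL$ takes values in $\tfrac{1}{e(\fieldL\mid\fieldKp)}\ZZ$, and the roots $\xi_j$ generally lie in a ramified extension, so neither $\ordp(a-\xi_j)$ nor $\kras\pf$ need be an integer. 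What your argument actually establishes is the non-strict bound $\ordp\pf(a)\le d\cdot\kras\pf+\ordp\lc(\pf)$ --- which, it is worth noting, is also all that the paper's own proof establishes (its final line reads $m\le\ordp\lc(\pf)+d\kras\pf$, despite the strict inequality in the statement). The non-strict version suffices for the later applications of the lemma, but as written your justification of strictness is a gap.
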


\begin{proof}
Denote $m := \max \bigl\{ \ordp \pf(a) \st a\in \valring\bigr\}$. Writing $\pf$ in factored form,
\[
\pf = \lc(\pf)\cdot (x - \xi_1)\dotsm (x - \xi_d).
\]
we obtain for any $a\in \valring$:
\begin{equation}\label{eq:krasner}
\ordp \pf(a) = \ordp \lc(\pf) + \sum_{i\leq d} \ordp (a - \xi_i).
\end{equation}

Now, the Krasner's lemma (see, e.g., \cite[Lemma~4.5]{Murty2002} or \cite[Proposition~4.4.6]{Cohen2007}) asserts that if for some $i\leq d$ we had
\[
\ordp(a - \xi_i) > \max_{j\neq i}\ordp(\xi_j - \xi_i),
\]
then $\fieldKp(\xi_i) \subseteq \fieldKp(a) = \fieldKp$, contradicting the assumption that $\pf$ has no root in~$\fieldKp$. It follows that for every $i\leq d$, we have 
\[
\ordp (a - \xi_i) \leq \max\{\ordp(\xi_j - \xi_i)\st j\neq i\} \leq \kras\pf.
\]
Therefore, \eqref{eq:krasner} yields the inequality $\ordp \pf(a) \leq \ordp \lc(\pf) + d\kras\pf$. Thus $m \leq \ordp \lc(\pf) + d\kras\pf$, as expected. 
\end{proof}

\begin{prop}
The cardinality of~$\setA$ is bounded above by $p^A$, where
\[
A := \frac{efp}{p-1} + fd\cdot \kras\pf + f\cdot \ordp\lc(\pf).
\]
\end{prop}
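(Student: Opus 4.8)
The plan is to execute Algorithm~\ref{alg:text_CZ}, read off the modulus that defines the set~$\setA$ it returns, count its residue classes, and bound the resulting exponent with Lemma~\ref{lem:bound_ord_Fa}.

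First I would observe that, whenever the algorithm halts --- whether by outputting `true' or `false' --- the current~$\setA$ is, by step~\eqref{st:outter_loop}, a system of representatives of the quotient ring $\quo{\intring}{\idealP^{m+M}}$, where $M=\lfloor\sfrac{ep}{(p-1)}\rfloor+1$ is the constant fixed in step~(1) and~$m$ is the terminal value of the loop variable. Since the residue class field $\quo{\intring}{\idealP}$ has $p^{f}$ elements and each layer $\idealP^{i}/\idealP^{i+1}$ is one‑dimensional over it, one gets $\card{\setA}=\card{\quo{\intring}{\idealP^{m+M}}}=p^{f(m+M)}$. Hence it suffices to prove $f(m+M)\le A$, equivalently $m+M\le\sfrac{ep}{(p-1)}+d\cdot\kras\pf+\ordp\lc(\pf)$ (we may assume $d=\deg\pf\ge 2$, so that $\kras\pf$ is defined).

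Next I would bound the two summands separately. For~$M$ one uses only $\lfloor\sfrac{ep}{(p-1)}\rfloor\le\sfrac{ep}{(p-1)}$, whence $M\le\sfrac{ep}{(p-1)}+1$. For~$m$ one uses that the loop variable is reassigned --- in step~\eqref{st:update_m} --- only to a value of the form $\ordp\pf(a)$ with $a\in\setA\subseteq\intring\subseteq\valring$, and only after the preceding test has certified $\pf(a)\in\pow$; as~$\pf$ has no roots in~$\fieldKp$ we have $\pf(a)\neq 0$, so $\ordp\pf(a)$ is a multiple of~$p$. Thus the terminal~$m$ is a non‑negative multiple of~$p$ satisfying $m\le\max\{\ordp\pf(a)\st a\in\valring\}$, which Lemma~\ref{lem:bound_ord_Fa} bounds strictly by $d\cdot\kras\pf+\ordp\lc(\pf)$. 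From $m<d\cdot\kras\pf+\ordp\lc(\pf)$ together with $m\in\ZZ$ I would then conclude $m\le d\cdot\kras\pf+\ordp\lc(\pf)-1$, so that
\[
m+M\le\bigl(d\cdot\kras\pf+\ordp\lc(\pf)-1\bigr)+\Bigl(\frac{ep}{p-1}+1\Bigr)=\frac{ep}{p-1}+d\cdot\kras\pf+\ordp\lc(\pf) ,
\]
which gives $\card{\setA}=p^{f(m+M)}\le p^{A}$.

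The step I expect to be the real obstacle is precisely the rounding ``$m<d\cdot\kras\pf+\ordp\lc(\pf)$ and $m\in\ZZ$, hence $m\le d\cdot\kras\pf+\ordp\lc(\pf)-1$''. Since the Krasner constant is in general a proper fraction, $d\cdot\kras\pf$ need not be an integer, so this deduction is not automatic; making it work seems to require genuinely using that~$m$ is an integer --- indeed a multiple of~$p$ --- lying \emph{strictly} below the bound of Lemma~\ref{lem:bound_ord_Fa}, possibly reinforced by a sharpening of that bound at the maximizing argument (only finitely many roots of~$\pf$ can sit at the extremal Krasner distance from it), and then checking that the slack so obtained exactly absorbs the rounding loss $\lfloor\sfrac{ep}{(p-1)}\rfloor+1-\sfrac{ep}{(p-1)}$ spent on~$M$. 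Everything else is routine bookkeeping.
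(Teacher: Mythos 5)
Your route is the same as the paper's: read off $\card{\setA}=\card{\quo{\intring}{\idealP^{m+M}}}=p^{f(m+M)}$ from step~\eqref{st:outter_loop} of Algorithm~\ref{alg:text_CZ}, identify the terminal $m$ with (a bound on) $\max\{\ordp\pf(a)\st a\in\valring\}$, and invoke Lemma~\ref{lem:bound_ord_Fa}. However, the obstacle you flag at the end is a genuine gap, and nothing in your text closes it. From $m<d\cdot\kras\pf+\ordp\lc(\pf)$ and $m\in\ZZ$ you cannot conclude $m\le d\cdot\kras\pf+\ordp\lc(\pf)-1$: the Krasner constant lives in the value group of the splitting field, so $d\cdot\kras\pf$ is in general a non‑integral rational, and the strict inequality buys you strictly less than one full unit of slack. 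Without that unit, the $+1$ in $M=\lfloor\sfrac{ep}{(p-1)}\rfloor+1$ is not absorbed, and what your argument actually delivers is $\card{\setA}\le p^{\,f(\lfloor\sfrac{ep}{(p-1)}\rfloor+1)+f\lceil d\cdot\kras\pf+\ordp\lc(\pf)\rceil-f}$, which can exceed $p^{A}$ (for instance whenever $\sfrac{ep}{(p-1)}\in\ZZ$ and the fractional part of $d\cdot\kras\pf+\ordp\lc(\pf)$ is small and positive). Your proposed repairs --- exploiting $p\mid m$, or sharpening Krasner's bound at the maximizing argument --- are only sketched as possibilities, not carried out, so the stated bound is not established.

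For comparison: the paper's own proof avoids this issue by silently setting $M:=\sfrac{ep}{(p-1)}$ (dropping the floor and the $+1$ that the algorithm actually uses) before counting, after which $f(m+M)\le A$ is immediate from Lemma~\ref{lem:bound_ord_Fa}; it never reconciles that normalization with step~(1) of Algorithm~\ref{alg:text_CZ}. So your careful reading exposes a real imprecision in the statement--algorithm pairing (the honest exponent should be $f(\lfloor\sfrac{ep}{(p-1)}\rfloor+1)$ in place of $\sfrac{efp}{(p-1)}$, or the terminal $m$ must be bounded more sharply), but as a proof of the proposition as stated, your argument is incomplete at exactly the step you identify.
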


\begin{proof}
Denote $M := \sfrac{ep}{(p-1)}$ and $\kappa := \kras\pf$. By the way the set~$\setA$ is constructed in Algorithm~\ref{alg:text_CZ}, we have
\[
\card{\setA} = \card{\quo{\valring}{\idealP^{m + M}}} = p^{fm + fM},
\]
where
\[
m = \max\bigl\{ \ordp \pf(a) \st a\in \valring \bigr\}. 
\]
It therefore suffices to show that $m \leq d\kappa + \ordp \lc(\pf)$.

\end{proof}

At first sight, it may seem that we have simply replaced one unknown quantity depending on~$\pf$ (the size of~$\setA$) with another (the Krasner constant). However, the Krasner constant has been the subject of extensive study, and effective bounds are available in terms of the degree and the height of~$\pf$. In particular, \cite[Lemma~2.3]{Pejkovic2012} shows that if $\pf$ is a polynomial with coefficients in the ring~$\ZZ$ of rational integers, then
\[
\kras\pf \leq \frac32\cdot d\cdot \log_p(d)\cdot H(f)^{1-d},
\]
where $H(\pf)$ denotes the \term{height} of~$\pf$, i.e., the maximum absolute value of its coefficients.  

As a direct consequence, we obtain:

\begin{cor}
If $f\in \ZZ[x]$, then
\[
\log_p \card{\setA} \leq
\frac{p}{p-1} + \frac{3}{2}\cdot d^2\cdot \log_p(d)\cdot H(\pf)^{1-d} + \ord_p\lc(\pf).
\]
\end{cor}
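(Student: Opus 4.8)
The plan is to do nothing more than chain together two results already available: the bound $\card{\setA} \leq p^{A}$ with $A = \frac{efp}{p-1} + fd\cdot \kras\pf + f\cdot \ordp\lc(\pf)$ from the preceding Proposition, and Pejković's effective estimate $\kras\pf \leq \frac32\, d\log_p(d)\,H(\pf)^{1-d}$ quoted just above for polynomials with rational integer coefficients. Taking $\log_p$ of the first bound, it remains only to substitute the second into the middle summand and simplify.

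Concretely, I would first observe that when $\pf\in\ZZ[x]$ the relevant local field is $\QQ_p$, so $e=f=1$ and the normalized valuation $\ordp$ restricts to the usual $p$-adic valuation $\ord_p$ on $\QQ$; hence $A$ collapses to $\frac{p}{p-1} + d\cdot\kras\pf + \ord_p\lc(\pf)$. (More generally, for an arbitrary $\fieldKp$ of residue characteristic $p$ one would retain an overall factor $ef$, since both $\kras\pf$ and $\ordp\lc(\pf)$ pick up a factor $e$ and the first term carries $ef$; I would note this, but the statement is phrased for the rational case.) Plugging in Pejković's bound then gives $d\cdot\kras\pf \leq \frac32\, d^{2}\log_p(d)\,H(\pf)^{1-d}$, and substituting into $\log_p\card{\setA} \leq A$ yields exactly
\[
\log_p\card{\setA} \leq \frac{p}{p-1} + \frac32\, d^{2}\log_p(d)\,H(\pf)^{1-d} + \ord_p\lc(\pf),
\]
as claimed.

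The only point requiring genuine care—and hence the only real obstacle—is the bookkeeping of valuation normalizations: one must check that the valuation appearing in the definition of $\kras\pf$ (the unique extension to the splitting field of the valuation on $\fieldKp$) is normalized consistently with the valuation underlying the cited bound of Pejković, and that the reduction to $\fieldKp=\QQ_p$ (equivalently, the explicit appearance or suppression of the factor $ef$) is stated correctly. Everything else is a direct substitution.
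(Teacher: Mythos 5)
Your proposal is correct and matches the paper's (implicit) argument exactly: the paper presents this corollary as a ``direct consequence'' of the preceding proposition combined with Pejković's bound, which is precisely the substitution you carry out, including the specialization $e=f=1$ for the rational case. Your remark about checking valuation normalizations and the $ef$ factor is a sensible caveat but does not change the argument.
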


For further results on the Krasner constant, we refer the reader to \cite{BD2025, BK2002, Khanduja1999, Khanduja2002}. We can now present an explicit algorithm for testing if a polynomial belongs to the class~$\CKp$.

\begin{alg}\label{alg:test_CK}
Given a polynomial $\pf\in \polyring$ this algorithm decides whether $\pf\in \CK$. 
\begin{enumerate}
\item Using polynomial square-free decomposition and clearing denominators, write
\[
c^p\cdot \pf = \lc(\pf)\cdot \tilde\pg_1\cdot \tilde\pg_2^2\dotsm \tilde\pg_k^k,
\]
where  $c \in \ZZ$ and $\tilde\pg_1, \dotsc, \tilde\pg_k\in \polyring$ are square-free polynomials.
\item Set
\[
\pf_* := \lc(\pf) \cdot \prod_{i=1}^k \tilde\pg_i^{i \bmod p}.
\]
\item Check if $\pf_*$ has a root in~$\fieldKp$, if it does output `false' and halt. 
\item Execute Algorithm~\ref{alg:text_CZ} for $\pf_*$ and $\rev \pf_*$. 
\item If the result is `false' for any of these two polynomials, output `false'. Otherwise, output `true'.
\end{enumerate}
\end{alg}

The correctness of the above algorithm is immediate in view of the preceding discussion and Proposition~\ref{prop:CK_iff_F_and_revF}.

\begin{rem}
One may also consider a broader class of polynomials than the one studied in this paper. Fix a subset 
\[
\setS = \bigl\{s_1\unpow, \dotsc, s_k\unpow\bigr\}
\]
of the $p$-power classes, and define the class of polynomials that map the entire field~$\fieldKp$ into the union of these classes (possibly together with zero):
\[
\bigl\{ \pf\in \polyring \st 
\pf(a) \in \{0\}\cup s_1\unpow\cup \dotsb \cup s_k\unpow 
\;\text{ for all }a\in \fieldKp \bigr\}.
\]
For instance, the polynomial $27x^9 + 54x^6 + 54x^3 + 40$ maps $\QQ_3$ into $\QQ_3^{\times 3}\cup 4\QQ_3^{\times 3}$. In terms of the illustration from the introduction, such polynomials only \emph{partially} fold the `fan' of $p$-power classes, rather than collapsing it completely.  

Easy adaptations of Algorithms~\ref{alg:text_CZ} and~\ref{alg:test_CK} provide a procedure for testing if a polynomial belongs to these more general classes. Unfortunately, these classes no longer form monoids.
\end{rem}

\section{Relations to the $p$-th powers}\label{sec:Pth_powers}
We can now focus on the original question---posed in the introduction---whether the polynomials that assume $p$-th powers everywhere on~$\fieldKp$ must already be $p$-th powers of some other polynomials. We show that it is not the case (see Proposition~\ref{prop:non_power_poly}). Moreover, in general, such polynomials are not even approximated by $p$-th powers except on a compact subset of~$\fieldKp$ (see Proposition~\ref{prop:approximation} and the comment following it). The first observation is trivial:

\begin{obs}
For every subset $\setA\subset \fieldKp$, the set $\setC(\setA)$ is a multiplicative monoid containing the submonoid $\polyring\cap \fieldKp[x]^p$.
\end{obs}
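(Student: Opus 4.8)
The plan is to verify directly the two assertions contained in the statement: that $\setC(\setA)$ is a multiplicative monoid, and that it contains the submonoid $\polyring\cap \fieldKp[x]^p$. Both reduce to the single elementary fact that $\pow = \fieldKp^p$ is itself a multiplicative submonoid of $\fieldKp$, since $b^p\cdot c^p = (bc)^p$ and $1 = 1^p$; by our convention $\pow$ also contains $0 = 0^p$, so no case distinction on whether a value vanishes will be necessary.

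First I would check the monoid structure of $\setC(\setA)$. The constant polynomial $1$ satisfies $1(a) = 1 = 1^p \in \pow$ for every $a\in\setA$, so $1\in\setC(\setA)$, and it is the identity for polynomial multiplication. For closure, take $\pf,\pg\in\setC(\setA)$ and fix an arbitrary $a\in\setA$; by hypothesis $\pf(a) = b^p$ and $\pg(a) = c^p$ for suitable $b,c\in\fieldKp$, hence
\[
(\pf\pg)(a) = \pf(a)\cdot\pg(a) = (bc)^p \in \pow.
\]
Since $a$ was arbitrary, $\pf\pg\in\setC(\setA)$, so $\setC(\setA)$ is a monoid.

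Next I would show the inclusion $\polyring\cap \fieldKp[x]^p \subseteq \setC(\setA)$. If $\pf\in\polyring$ and $\pf = \pg^p$ for some $\pg\in\fieldKp[x]$, then for every $a\in\setA\subseteq\fieldKp$ we have $\pf(a) = \pg(a)^p\in\fieldKp^p = \pow$, whence $\pf\in\setC(\setA)$. That $\polyring\cap\fieldKp[x]^p$ is itself a submonoid is equally immediate: it contains $1 = 1^p$, and if $\pf_i = \pg_i^p\in\polyring$ for $i=1,2$, then $\pf_1\pf_2 = (\pg_1\pg_2)^p$ again lies in $\polyring$. No genuine obstacle arises; the statement is, as advertised, trivial, and the only point worth recording explicitly is the reading of $\pow$ as $\fieldKp^p$.
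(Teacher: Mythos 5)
Your proof is correct and is exactly the routine verification the paper has in mind: the paper states this Observation without proof, calling it trivial, and your argument (that $\pow=\fieldKp^p$ is multiplicatively closed and contains $1$ and $0$, hence $\setC(\setA)$ is a monoid containing all $p$-th powers of polynomials that lie in $\polyring$) is the intended one.
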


The original question mentioned in the introduction asked whether this submonoid is proper when $\setA = \QQ$ and $\fieldKp = \QQ_2$. The following proposition shows that the answer is affirmative for every number field~$\fieldK$.

\begin{prop}\label{prop:non_power_poly}
The monoid $\polyring\cap \fieldKp[x]^p$ is a proper submonoid of $\CK$.
\end{prop}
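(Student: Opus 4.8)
The plan is to construct, for an arbitrary number field $\fieldK$ and prime $\idealP \mid p$, an explicit polynomial $\pf \in \CK$ that is not a $p$-th power in $\fieldKp[x]$. Since $\polyring \cap \fieldKp[x]^p \subseteq \CK$ always holds by the preceding observation, producing one such witness suffices to establish properness. The example motivating the whole paper, $4x^4 + 4x^2 + 9 = (2x^2+1)^2 + 8$ over $\QQ_2$, suggests the right shape: take a genuine square (or $p$-th power) of a polynomial and perturb it by a constant lying deep enough in $\idealP$ that the perturbation cannot destroy membership in any $p$-th power class, yet chosen so the resulting polynomial has odd-order behaviour preventing it from being a global $p$-th power of a polynomial.

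Concretely, I would proceed as follows. First, pick a uniformizer $\unif \in \intring$ and an integer $N$ large enough that $1 + \idealP^N \subseteq \unpow$; by Proposition~\ref{prop:local_power_thm} any $N > \sfrac{ep}{(p-1)}$ works. Next, set $\ph := x^{?} + \dotsb \in \polyring$ to be any monic polynomial whose $p$-th power $\ph^p$ has constant term equal to $1$ and has degree divisible by $p$ (e.g.\ $\ph = x+1$, giving $\ph^p = (x+1)^p$ with constant term $1$), and define
\[
\pf := \ph^p + \unif^N \cdot x.
\]
I would then verify $\pf \in \CK$ using Proposition~\ref{prop:CK_iff_F_and_revF}: one must check that both $\pf$ and $\rev\pf$ lie in $\CZ$. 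For any $a \in \intring$ we have $\pf(a) = \ph(a)^p(1 + \unif^N \cdot a \cdot \ph(a)^{-p})$ when $\ph(a) \neq 0$ — here one needs $\ordp \ph(a)^{-p}$ to be controlled, which requires choosing $\ph$ so that $\ph(a)$ is always a $\idealP$-unit for $a \in \intring$, or else enlarging $N$ depending on $a$ in the style of Corollary~\ref{cor:equal_classes_suff}. A cleaner choice is $\ph = 1$ replaced by a higher construction, or simply taking $\pf := (1 + \unif^M x^2)^{?}$; the key point is to arrange $\pf(a) \in 1 + \idealP^N$ for all $a \in \intring$, exactly as in the Observation proving $\CK \subsetneq \CZ$, and then symmetrically for $\rev\pf$. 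Finally, to see $\pf \notin \fieldKp[x]^p$: if $\pf = \pg^p$ then comparing with $\ph^p + \unif^N x$ forces $\pg = \ph + (\text{higher-order } \unif\text{-adic correction})$, and matching the coefficient of $x$ shows the correction is nonzero with $\ordp$ of its leading term not divisible by the relevant constraint — more robustly, one exhibits a specific argument $a \in \fieldKp$ (for instance $a = \unif^{-m}$ with $m \not\equiv 0 \pmod p$) at which $\ordp \pf(a)$ is incompatible with being $p$ times the valuation of $\pg(a)$, though one must be careful since $\pf \in \CK$ means $\ordp\pf(a)$ \emph{is} always divisible by $p$. The genuine obstruction to $\pf$ being a $p$-th power must instead come from the \emph{unit part}: there will be an argument $a$ where $\pf(a)/\unif^{\ordp\pf(a)}$ lands in a $p$-th power class, so that $\pf(a)$ is a $p$-th power, but no single polynomial $\pg$ over $\fieldKp$ can have $\pg^p = \pf$ because the two relevant perturbation terms ($\unif^N x$ at the bottom, and its reciprocal-image at the top degree, or a degree mismatch) cannot both be absorbed.

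The main obstacle I anticipate is the last step — cleanly certifying $\pf \notin \fieldKp[x]^p$. Membership in $\CK$ is the "easy" direction (it follows the template already used twice in the excerpt), but ruling out that $\pf$ is globally a $p$-th power of a polynomial requires a genuinely algebraic argument about factorization in $\fieldKp[x]$. The cleanest route is probably: assume $\pf = \pg^p$, pass to the square-free / irreducible factorization of $\pf$ over $\fieldKp$ (or over $\fieldK$), and observe that $\pf := \ph^p + (\text{small perturbation})$ is, for a suitably chosen perturbation, square-free or at least not a perfect $p$-th power — e.g.\ because $\gcd(\pf, \pf')$ is trivial, which can be forced by making the perturbation a linear term $\unif^N x$ so that $\pf$ and $\pf'$ share no common root (the perturbation shifts all the roots of $\ph^p$). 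Then $\pf$ square-free and of degree $\geq 1$ cannot be $\pg^p$ for any $p \geq 2$. I would structure the final proof around: (1) fix the perturbation to be $\unif^N \cdot x$ with $N$ large; (2) show $\gcd(\pf,\pf') = 1$, hence $\pf$ is square-free, hence $\pf \notin \fieldKp[x]^p$; (3) show $\pf \in \CZ$ and $\rev\pf \in \CZ$ by the $1 + \idealP^N \subseteq \unpow$ argument, noting $\rev\pf = \unif^N x^{\deg\pf} + \rev{(\ph^p)}$ has an analogous shape after factoring out the appropriate power; (4) conclude $\pf \in \CK$ by Proposition~\ref{prop:CK_iff_F_and_revF}. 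The one delicate point in (3) is handling arguments $a \in \valring$ with $\ordp a > 0$, where $\unif^N a$ has valuation $> N$ so the perturbation is even deeper and the estimate only improves — so that case is in fact automatic, and the real work is just the unit case $\ordp a = 0$, already covered by Proposition~\ref{prop:local_power_thm}.
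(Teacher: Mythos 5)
Your overall strategy coincides with the paper's: perturb a $p$-th power of a polynomial by a term lying deep in $\idealP$, verify membership in $\CK$ via Proposition~\ref{prop:local_power_thm}, and rule out being a global $p$-th power by showing the result is square-free. The gap is that the witness polynomial is never correctly pinned down, and the one concrete candidate you lead with fails. For $\ph = x+1$ and $\pf = (x+1)^p + \unif^N x$, evaluate at $a = -1$: you get $\pf(-1) = -\unif^N$, which is not a $p$-th power when $p \nmid N$, and even when $p \mid N$ it is a $p$-th power only if $-1 \in \unpow$ (false already for $\fieldKp = \QQ_2$, so $(x+1)^2 + 2^N x \notin \setC(\ZZ)$ for every $N$). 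Nearby points $a = -1 + \unif^k$ exhibit the same failure in the unit part. You do flag exactly this issue --- $\ph(a)$ is not uniformly a $\idealP$-unit --- but the proposed remedies do not close it: ``enlarging $N$ depending on $a$'' is unavailable because the polynomial is fixed before $a$ is quantified, and the alternative bases you mention are left with question marks and unverified. The paper's resolution is to take the base $1 + \unif x^p$, which is a unit at \emph{every} point of $\valring$, and to make the perturbation a constant $\unif^m$ with $m > \sfrac{ep}{(p-1)}$; it then treats $\ordp a < 0$ by a direct rescaling computation rather than via $\rev\pf$ (your route through Proposition~\ref{prop:CK_iff_F_and_revF} would also work once the base is fixed, but only then).

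The square-freeness step also needs repair as stated. Your claim that a linear perturbation ``shifts all the roots'' so that $\gcd(\pf,\pf') = 1$ is not an argument: for $\pf = \ph^p + \unif^N x$ one has $\pf' = p\,\ph^{p-1}\ph' + \unif^N$, and a common root of $\pf$ and $\pf'$ is not visibly excluded. With a constant perturbation the derivative is untouched, $\pf' = p\,\ph^{p-1}\ph'$, and coprimality reduces to checking that $\ph^p + \unif^m$ is divisible neither by $\ph$ nor by the irreducible factors of $\ph'$; for $\ph = 1 + \unif x^p$ this is the two-line congruence check $\pf \equiv \unif^m \pmod{\ph}$ and $\pf \equiv 1 + \unif^m \pmod{x}$ that closes the paper's proof. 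So the missing piece is not the idea but the witness: as written, neither membership in $\CK$ nor square-freeness is actually established for any concrete polynomial.
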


\begin{proof}
We must show that $\CK$ contains polynomials that are not $p$-th powers of other polynomials. As always, let $\unif\in \intring$ be a uniformizer of~$\idealP$. Consider the following polynomial:
\[
\pf := \bigl(1 + \unif\cdot x^p)^p + \unif^m\in \polyring,
\]
where $m$ is an integer strictly greater than $\sfrac{ep}{(p-1)}$. We claim that $\pf(a) \in \pow$ for every $a\in \fieldK$. 

Write $a$ in the form $a = \unif^v\cdot u$, where $u$ is a $\idealP$-unit and $v = \ordp a$. We shall consider two cases. First, assume that $v \geq 0$. Then, $1 + \unif\cdot a$ is a $\idealP$-adic unit and so $\pf(a)$ is congruent to its $p$-th power modulo~$\idealP^m$. By Proposition~\ref{prop:local_power_thm}, it follows that $\pf(a)$ is a $p$-th power.

Conversely, assume that $v < 0$. It is clear that $\pf(a)$ belongs to~$\pow$ if and only if $\unif^{-vp^2 - p}\cdot \pf(a)$ does. Write
\[
\unif^{-vp^2 - p}\cdot \pf(\unif^v\cdot u)
= \bigl(\unif^{-vp-1} + u^p\bigr)^p + \unif^{m - vp^2 - p}.
\]
Since $v < 0$, we have $-vp - 1 > 1$ and $m - vp^2 - p > m$. Consequently,
\[
\ordp (\unif^{-vp-1} + u^p) = \min \{ -vp - 1, 0\} = 0.
\]
Therefore, the expression in parentheses is a $\idealP$-adic unit, and the entire right-hand side is congruent to a $p$-th power of a unit modulo $\idealP^{m - vp^2 - p}\subset \idealP^m$. Thus, $\pf(a)\in \pow$, as claimed.

It remains to show that $\pf$ is not itself a $p$-th power of a polynomial over~$\fieldKp$. Differentiating, we obtain
\[
\pf' = \unif\cdot p^2\cdot x^{p - 1}\cdot \bigl(1 + \unif\cdot x^p\bigr)^{p-1}.
\]
Moreover,
\[
\pf \equiv 1 + \unif^m\neq 0\pmod{x}
\qquad\text{and}\qquad 
\pf \equiv \unif^m\neq 0\pmod{ 1 + \unif\cdot x^p}. 
\]
It follows that $\pf$ and $\pf'$ are relatively prime, and hence $\pf$ is square-free. A square-free polynomial cannot be a perfect $p$-th power, which completes the proof.
\end{proof}

Let us now return to the analogy between the class~$\CKp$ and the class of real positive semi-definite polynomials. Let $\|\cdot \|$ denote the max-norm on the space of real polynomials:
\[
\| \pf_0 + \pf_1x + \dotsb + \pf_dx^d \| := \max\bigl\{ |\pf_1|, \dotsc, |\pf_d| \bigr\}.
\]
Consider a real polynomial $\pf\in \RR[x]$ that is strictly positive on~$\RR$. Then there exists $\varepsilon > 0$ such that every polynomial~$\pg$ satisfying $\|\pf - \pg\| < \varepsilon$ is also strictly positive on~$\RR$. This stability property fails, however, when $\pf$ is merely non-negative. For instance, $\pf = x^2$ can be approximated by $\pg = x^2 - \xi^2$ with $\xi$ arbitrarily small, yet $\pg$ is no longer non-negative. An analogous phenomenon occurs for the class~$\CKp$. 

\begin{prop}\label{prop:continuity}
Let $\pf = \pf_0 + \pf_1x + \dotsb +\pf_dx^d$ be a polynomial of degree~$d$. Assume that $f\in \CKp$ and $\pf$ has no roots in~$\fieldKp$. Let $M$ be an integer satisfying
\[
M\geq d\cdot \kras\pf + \ordp(\pf_0\cdot \pf_d) + \frac{ep}{p-1}.
\]
If $\pg = \pg_0 + \pg_1x + \dotsb + \pg_dx^d$ is another polynomial and
\[
\min\bigl\{ \ordp(\pf_i - \pg_i) \st i\leq d\bigr\} > M,
\]
then $\pg\in \CKp$.
\end{prop}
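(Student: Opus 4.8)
The plan is to show that the stated proximity of the coefficients of $\pg$ to those of $\pf$ forces, for every $a\in\fieldKp$, the values $\pg(a)$ and $\pf(a)$ to lie in the same $p$-th power class, so that $\pg(a)\in\pow$ follows from $\pf\in\CKp$. The natural split is into the case $a\in\valring$ (arguments of non-negative valuation) and the case $a^{-1}\in\valring$, mirroring the proof of Proposition~\ref{prop:CK_iff_F_and_revF}; in the second case one passes to the reciprocal polynomials $\rev\pf$ and $\rev\pf$, noting that $\pg(a)=a^{-d}(\rev\pg)(a)$, that $p\mid d$ by Lemma~\ref{lem:p_div_deg_and_ord} so $a^{-d}\in\pow$, and that the coefficients of $\rev\pg$ are just a permutation of those of $\pg$, hence equally close to those of $\rev\pf$. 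One also checks the constant-coefficient obstructions line up: $\ordp\pf_d=\ordp(\rev\pf)(0)$ is handled because $M$ incorporates $\ordp(\pf_0\pf_d)$, and $\rev\pf$ has no roots in $\fieldKp$ iff $\pf$ does (a nonzero root $\xi$ of $\rev\pf$ would give a root $\xi^{-1}$ of $\pf$; and $0$ is not a root of $\rev\pf$ since $\pf_d\neq0$).

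So it suffices to treat $a\in\valring$. Here I would first bound $\ordp\pf(a)$: by Lemma~\ref{lem:bound_ord_Fa}, $\ordp\pf(a) < d\cdot\kras\pf + \ordp\lc(\pf) = d\cdot\kras\pf + \ordp\pf_d \le M - \frac{ep}{p-1} - \ordp\pf_0 \le M - \frac{ep}{p-1}$, so in particular $\ordp\pf(a) < M$. Next, since $a\in\valring$ and $\ordp(\pf_i-\pg_i) > M$ for all $i$, we get
\[
\ordp\bigl(\pf(a)-\pg(a)\bigr) \;=\; \ordp\Bigl(\sum_{i\le d}(\pf_i-\pg_i)a^i\Bigr) \;\ge\; \min_{i\le d}\ordp(\pf_i-\pg_i) \;>\; M.
\]
Combining the two displays, $\ordp(\pf(a)-\pg(a)) > M > \ordp\pf(a)$, which forces $\ordp\pg(a) = \ordp\pf(a) =: v$, and moreover $\pf(a)\equiv\pg(a)\pmod{\idealP^{M+1}}$ with $M+1 > v + \frac{ep}{p-1}$ (because $M \ge v + \frac{ep}{p-1}$ from the bound above, even with room to spare). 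Corollary~\ref{cor:equal_classes_suff} then gives $\pf(a)\unpow = \pg(a)\unpow$, and since $\pf(a)\in\pow$ we conclude $\pg(a)\in\pow$.

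For the reciprocal case, applying the same argument to $\rev\pf$ and $\rev\pg$ at $a\in\valring$ requires the analogue of the above valuation bound for $\rev\pf$, namely $\ordp(\rev\pf)(a) < d\cdot\kras(\rev\pf) + \ordp\lc(\rev\pf)$. Here $\lc(\rev\pf)=\pf_0$, so $\ordp\lc(\rev\pf)=\ordp\pf_0$, which is again absorbed by the term $\ordp(\pf_0\pf_d)$ in the lower bound for $M$; and one should observe $\kras(\rev\pf)=\kras\pf$ since the roots of $\rev\pf$ are exactly the reciprocals of the roots of $\pf$ and reciprocation preserves valuation differences among elements of valuation zero (all roots have valuation zero, as $\pf,\rev\pf$ are unit-normalizable with unit constant and leading terms up to the controlled $\ordp(\pf_0\pf_d)$ factor — this needs a small check). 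The main obstacle I anticipate is precisely this bookkeeping in the reciprocal case: making sure that the single bound on $M$, built from $\kras\pf$, $\ordp(\pf_0\pf_d)$, and $\frac{ep}{p-1}$, simultaneously controls the valuation growth of $\pf(a)-\pg(a)$ on $\valring$, the valuation growth of $(\rev\pf)(a)-(\rev\pg)(a)$ on $\valring$, and the two Krasner-type bounds for $\pf$ and $\rev\pf$ — so the careful step is verifying $\kras(\rev\pf)=\kras\pf$ (or at least $\le$, which is all we need) and that $\ordp\lc(\rev\pf)+d\cdot\kras(\rev\pf)\le M - \frac{ep}{p-1}$. Once that symmetry is in hand, the two cases close identically and $\pg\in\CKp$ follows.
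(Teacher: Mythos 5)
Your proposal follows essentially the same route as the paper: bound $\ordp\pf(a)$ on $\valring$ via Lemma~\ref{lem:bound_ord_Fa}, observe that the coefficientwise closeness forces $\ordp(\pf(a)-\pg(a))>M>\ordp\pf(a)+\sfrac{ep}{(p-1)}$, invoke Corollary~\ref{cor:equal_classes_suff} to place $\pg(a)$ in the same $p$-th power class as $\pf(a)$, and then repeat the argument for the reciprocal polynomials and conclude with Proposition~\ref{prop:CK_iff_F_and_revF}. The one point you flag as needing verification---that $\kras(\rev\pf)$ is controlled by $\kras\pf$ and $\ordp(\pf_0\pf_d)$---is a genuine subtlety (your parenthetical justification that all roots have valuation zero is not true in general), but the paper's own proof passes over it just as silently, asserting only that ``the same arguments'' apply to $\rev\pf$ with $\ordp\pf_0$ in place of $\ordp\pf_d$.
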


\begin{proof}
Write $\pg$ as $\pg = \pf + \ph$, where $\ph = \ph_0 + \ph_1x + \dotsb + \ph_dx^d$, By assumption, $\ordp\ph_i > M$ for every $i\leq d$. Take any $a\in \valring$. Lemma~\ref{lem:bound_ord_Fa} implies that
\begin{multline*}
\ordp\ph(a)
= \ordp \Bigl(\sum_{i\leq d} \ph_i a^i\Bigr)
\geq \min\bigl\{ \ordp\ph_i + i\cdot \ordp a\st i\leq d\bigr\}
> M \geq\\
\geq d\cdot \kras\pf + \ordp \pf_d + \frac{ep}{(p-1)}
> \ordp \pf(a) + \frac{ep}{(p-1)}.
\end{multline*}
Consequently, $\ordp\pg(a) = \ordp \pf(a)$ for all $a\in\valring$. Hence, $\pg(a)\in \pf(a)\cdot \unpow = \unpow$ by Corollary~\ref{cor:equal_classes_suff}. Thus, we have $\pg\in \COp$.

Consider now the reciprocal polynomial of~$\pg$. We have
\[
\rev \pg = \rev\pf + \rev \ph, 
\]
and the same arguments as above (except that this time we use $\ordp\pf_0$ instead of $\ordp\pf_d$) show that $\rev\pg\in \COp$. By Proposition~\ref{prop:CK_iff_F_and_revF}, we conclude that $\pg\in \CKp$.
\end{proof}

As in the real case, the assumption that $\pf$ has no roots in~$\fieldKp$ is indispensable.
Indeed, the polynomial $\pf(x)=x^p$ belongs to~$\CKp$, yet it can be approximated by polynomials of the form $\pg(x)=(x-\xi_1)\dotsm(x-\xi_p)$ with $\xi_i$ sufficiently close to zero. It is obvious that one can choose $\xi_1, \dotsc, \xi_p$ in such a way that $\pg$ does not lie in $\CKp$.

\begin{rem}
In particular, Proposition~\ref{prop:continuity} implies that the monoid~$\CKp$  is considerably larger than $\polyring\cap \fieldKp[x]^p$.
\end{rem}

Finally, to complete the picture, we show that polynomials from~$\CZ$ can be approximated over~$\valring$ by $p$-th powers of polynomials. However, polynomials from~$\CK$ cannot be approximated this way on the whole~$\fieldKp$ unless they are already $p$-th powers themselves.

\begin{prop}\label{prop:approximation}
Let $\pf\in \CZ$. Then for every positive integer~$n$, there is a polynomial $\pg\in \polyring$ such that
\[
\ordp (\pf - \pg^p)(a) \geq n
\]
for all $a\in \valring$.
\end{prop}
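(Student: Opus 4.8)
The plan is to approximate $\pf$ on the compact ring $\valring$ by extracting an approximate $p$-th root. Since $\pf\in \CZ = \COp$, every value $\pf(a)$ with $a\in \valring$ is a $p$-th power in $\fieldKp$; the goal is to produce a single polynomial $\pg$ whose $p$-th power is $\idealP^n$-close to $\pf$ uniformly on $\valring$. The natural first reduction is to pass to the quotient ring $R_N := \quo{\intring}{\idealP^N}$ for a suitably large exponent $N$ (depending on $n$, $e$, $p$, and $\deg\pf$). I would work out the precise $N$ from Proposition~\ref{prop:local_power_thm} / Corollary~\ref{cor:equal_classes_suff}: roughly $N = n + \lceil ep/(p-1)\rceil$ plus a margin to absorb the valuations $\ordp\pf(a)$, which are bounded by Lemma~\ref{lem:bound_ord_Fa} once one notes that values of small valuation are the only ones that can fail to be $p$-th powers "robustly". (If $\pf$ has roots in $\valring$, one first splits off the corresponding linear factors, whose multiplicities are divisible by $p$ by Lemma~\ref{lem:root_multiplicity} applied to the relevant reasoning — or more simply one absorbs them into $\pg$ directly.)

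The core step is a polynomial interpolation / Hensel-type argument. Over the finite ring $R_N$, I would construct $\pg$ by interpolation: for each residue class $a\in R_N$, choose $b_a\in \valring$ with $b_a^p \equiv \pf(a) \pmod{\idealP^N}$ (possible because $\pf(a)$ is a genuine $p$-th power in $\fieldKp$, hence a $p$-th power mod any $\idealP^N$), and then let $\pg$ be the unique polynomial of degree $< \card{R_N}$ (or a convenient such polynomial over $\intring$) taking the value $b_a$ at each representative $a$. Lagrange interpolation over $R_N$ is available provided the representatives can be chosen so that their pairwise differences are units — which one cannot arrange for all of $R_N$ — so instead I would interpolate only over a system of representatives of the \emph{residue field} $\quo{\intring}{\idealP}$ at the "unit level" and handle higher-order agreement by a lifting argument, or, more cleanly, invoke the fact that any function $R_N \to R_N$ is realized by a polynomial with $\intring$-coefficients after multiplying by a suitable integer constant (which, being a $p$-th power times a unit times $\unif^j$, can be adjusted away). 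Once $\pg$ agrees with an approximate $p$-th root of $\pf$ on every residue class mod $\idealP^N$, continuity of polynomials in the $\idealP$-adic topology (every $a\in\valring$ is within $\idealP^N$ of a representative, and $\pf$, $\pg^p$ are uniformly continuous on the compact set $\valring$ with modulus controlled by their coefficients) upgrades pointwise agreement on representatives to the uniform estimate $\ordp(\pf - \pg^p)(a) \geq n$ for all $a\in\valring$.

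The main obstacle I anticipate is the interpolation step: producing a \emph{single} polynomial $\pg$ over $\intring$ (not just a set-theoretic $p$-th root function) whose $p$-th power approximates $\pf$, while keeping track of valuations. The cleanest route is probably to avoid naive Lagrange interpolation and instead use a Hensel-style iterative construction: start with $\pg_0$ a polynomial approximating $\pf$ modulo a low power of $\idealP$ (e.g. using that $\pf \bmod \idealP$ is a $p$-th power in the residue ring after the square-free reduction already carried out in Section~\ref{sec:decidability}, so $\pf \equiv \pg_0^p$ for some $\pg_0$), then refine: if $\pf \equiv \pg_j^p \pmod{\idealP^{N_j}}$ on $\valring$, write $\pf - \pg_j^p = \unif^{N_j} \cdot R_j$ on $\valring$ and solve $(\pg_j + \unif^{?} \cdot S)^p \equiv \pf$ to the next order, using that $p \cdot \pg_j^{p-1}$ is invertible modulo $\idealP$ away from the zeros of $\pg_j$ — here $\ordp p = e$ costs us $e$ in the valuation at each step, which is why $N$ must be taken with the $ep/(p-1)$ cushion. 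Because $\valring$ is compact and each refinement is a finite check over $\quo{\intring}{\idealP^{N_j}}$, the process terminates after finitely many steps with the desired $\pg$. Making the bookkeeping of the exponents $N_j$ precise — and confirming that the "bad" arguments of small valuation contribute only a bounded loss, via Lemma~\ref{lem:bound_ord_Fa} — is the one place where genuine care is needed; the rest is routine.
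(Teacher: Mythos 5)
You have correctly located the crux of the proposition: after reducing modulo $\idealP^{N}$, one must realize a set-theoretic choice of $p$-th roots of the values of $\pf$ on the finite ring $R_N=\quo{\intring}{\idealP^{N}}$ by a \emph{single} polynomial. But none of the three mechanisms you offer actually closes this step. Lagrange interpolation you rightly discard. The fallback claim that ``any function $R_N\to R_N$ is realized by a polynomial with $\intring$-coefficients after multiplying by a suitable integer constant'' is false: for $N\geq 2$ the polynomial functions form a proper subring of all self-maps of $R_N$ (already on $\ZZ/4$ there are $64$ polynomial functions among $256$ maps), and multiplying by a constant $c$ only shrinks the reachable set further --- e.g.\ any polynomial $h$ over $\ZZ/p^2$ with $h(0)=0$ forces $h(p)\equiv 0\pmod{p}$, so a function with $\phi(0)=0$, $\phi(p)=1$ is not of the form $h/c$ for any invertible adjustment. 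Finally, the Hensel-style iteration does not escape the problem: the correction $S$ you need satisfies $S(a)\equiv\bigl(\pf(a)-\pg_j(a)^p\bigr)/\bigl(p\,\pg_j(a)^{p-1}\bigr)$ pointwise, which is again merely a \emph{function} on $R_{N_j}$ --- the polynomial $\pf-\pg_j^p$ need not be divisible by $p\,\pg_j^{p-1}$ in $\polyring$ --- so you are back to the interpolation problem you started with; moreover the step degenerates wherever $\ordp\pg_j(a)>0$, which is unavoidable since $\pf\in\CZ$ may have roots in $\valring$ (e.g.\ $\pf=x^p$), so Lemma~\ref{lem:bound_ord_Fa} is not available and the valuation bookkeeping you defer cannot be made uniform.

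The paper closes exactly this gap by citing Frisch's characterization of polynomial functions on finite commutative local rings: the $p$-th root function $a\mapsto b$ with $\overline{\pf}(a)=b^p$ on $R_n$ inherits from $\pf$ the compatibility property that characterization requires, hence is induced by some $\overline{\pg}\in R_n[x]$, which one then lifts to $\polyring$. (The paper also sketches a second route you half-gesture at with your uniform-continuity remarks: choose a continuous branch $\Gamma:\valring\to\fieldKp$ of the $p$-th root of $\pf$ and approximate it by a polynomial via Mahler's theorem.) If you want to keep your structure, the honest fix is to replace your interpolation/Hensel step by an appeal to such a result on polynomial functions over $\quo{\intring}{\idealP^{n}}$, or to prove the needed compatibility of the root function directly.
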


One possible way to prove this proposition is to take a continuous function $\Gamma: \valring\to \fieldKp$ such that 
\[
\pf(a) = \Gamma(a)^p
\]
for all $a\in \valring$. Then apply Mahler’s theorem (see, e.g., \cite[Theorem~1.1]{Shalit2016}) to approximate~$\Gamma$ by a polynomial~$\pg$. Alternatively, there is a short and direct proof, which we present below.

\begin{proof}
\newcommand{\bpf}{\overline{\pf}}
\newcommand{\bpg}{\overline{\pg}}
Fix $n\geq 0$ and consider the quotient ring $R_n := \quo{\intring}{\idealP^n}$. Let $\bpf\in R_n[x]$ be the image of~$\pf$ under the canonical epimorphism $\polyring\onto R_n[x]$. By assumption, $\pf$ belongs to $\CZ$. Hence, for every $a\in R_n$ there exists $b\in R_n$ such that $\bpf(a) = b^p$. The function mapping~$a$ to~$b$ is polynomial by \cite{Frisch1999}. Thus, there is $\bpg\in R_n[x]$ such that $\bpf - \bpg^p$ vanishes on the entire ring~$R_n$. Lift $\bpg$ to a polynomial $\pg\in \polyring$. Then $(\pf - \pg^p)(a)\in \idealP^n$ for every $a\in \valring$, which concludes the proof.
\end{proof}

It goes without saying that the above approximation of~$\pf$ by a $p$-th power of another polynomial is possible only over~$\valring$ (or other compact subsets), but not on the whole field~$\fieldKp$. IThis follows from the well-known fact that any polynomial bounded on an entire local field must be constant. Indeed, fix a polynomial $\pf\in \CK$ and an integer~$n$. Suppose that there is some polynomial $\pg\in \polyring$ such that 
\begin{equation}\label{eq:approx}
\ordp (\pf - \pg^p)(a) \geq n
\end{equation}
for all $a\in \fieldKp$. Set $\ph := \pf - \pg^p$. If $\ph$ is not constant, say $\ph = \ph_0 + \ph_1x + \dotsb + \ph_d x^d$ with $\ph_d\neq 0$ and $d > 0$, choose any $a\in \fieldKp$ satisfying
\[
\ordp a <
\frac{1}{d + 1}\cdot \bigl(
\min\{ n, \ordp \ph_0, \dotsc, \ordp \ph_{d-1}\} 
- \ordp \ph_d\bigr).
\]
Then $\ordp \ph(a) = \ordp \ph_da^d < 0$, contradicting~\eqref{eq:approx}.

\bibliographystyle{plain}
\bibliography{C-polys}

@article {AK1966,
    AUTHOR = {Ax, James and Kochen, Simon},
     TITLE = {Diophantine problems over local fields. {III}. {D}ecidable
              fields},
   JOURNAL = {Ann. of Math. (2)},
  FJOURNAL = {Annals of Mathematics. Second Series},
    VOLUME = {83},
      YEAR = {1966},
     PAGES = {437--456},
      ISSN = {0003-486X},
   MRCLASS = {10.67 (10.15)},
  MRNUMBER = {201378},
MRREVIEWER = {John V. Armitage},
       DOI = {10.2307/1970476},
       URL = {https://doi.org/10.2307/1970476},
}

@misc{BD2025,
      title={$p$-adic root separation and the discriminant of integer polynomials}, 
      author={Victor Beresnevich and Bethany Dixon},
      year={2025},
      eprint={2504.03851},
      archivePrefix={arXiv},
      primaryClass={math.NT},
      url={https://arxiv.org/abs/2504.03851}, 
}

@article {BK2002,
    AUTHOR = {Bhatia, Saurabh and Khanduja, Sudesh K.},
     TITLE = {A characterization of {K}rasner's constant},
   JOURNAL = {Comm. Algebra},
  FJOURNAL = {Communications in Algebra},
    VOLUME = {30},
      YEAR = {2002},
    NUMBER = {6},
     PAGES = {2975--2991},
      ISSN = {0092-7872},
   MRCLASS = {12J20 (12F05 12J10)},
  MRNUMBER = {1908252},
MRREVIEWER = {Joachim Gr\"{a}ter},
       DOI = {10.1081/AGB-120004003},
       URL = {https://doi.org/10.1081/AGB-120004003},
}

@Unpublished{Clark2010,
 author = {Clark, Pete L.},
 title = {Algebraic Number Theory {II}: Valuations, Local Fields and Adeles},
 note = {lecture notes, \url{http://alpha.math.uga.edu/~pete/8410FULL.pdf}},
 year = {2010},
 urldate = {2024, Sept. 18},
}

@book {Cohen2007,
    AUTHOR = {Cohen, Henri},
     TITLE = {Number theory. {V}ol. {I}. {T}ools and {D}iophantine
              equations},
    SERIES = {Graduate Texts in Mathematics},
    VOLUME = {239},
 PUBLISHER = {Springer, New York},
      YEAR = {2007},
     PAGES = {xxiv+650},
      ISBN = {978-0-387-49922-2},
   MRCLASS = {11-01 (11Dxx 11Rxx 11Sxx)},
  MRNUMBER = {2312337},
MRREVIEWER = {R. C. Baker},
}

@article {Shalit2016,
    AUTHOR = {de Shalit, Ehud},
     TITLE = {Mahler bases and elementary {$p$}-adic analysis},
   JOURNAL = {J. Th\'{e}or. Nombres Bordeaux},
  FJOURNAL = {Journal de Th\'{e}orie des Nombres de Bordeaux},
    VOLUME = {28},
      YEAR = {2016},
    NUMBER = {3},
     PAGES = {597--620},
      ISSN = {1246-7405},
   MRCLASS = {11S31 (11S80)},
  MRNUMBER = {3610689},
MRREVIEWER = {Victor Alexandru},
       DOI = {10.5802/jtnb.955},
       URL = {https://doi.org/10.5802/jtnb.955},
}

@book {FV2002,
    AUTHOR = {Fesenko, Ivan B. and Vostokov, Sergeĭ V.},
     TITLE = {Local fields and their extensions},
    SERIES = {Translations of Mathematical Monographs},
    VOLUME = {121},
   EDITION = {Second},
      NOTE = {With a foreword by I. R. Shafarevich},
 PUBLISHER = {American Mathematical Society, Providence, RI},
      YEAR = {2002},
     PAGES = {xii+345},
      ISBN = {0-8218-3259-X},
   MRCLASS = {11Sxx (11S31)},
  MRNUMBER = {1915966},
MRREVIEWER = {Jerzy Browkin},
       DOI = {10.1090/mmono/121},
       URL = {https://doi.org/10.1090/mmono/121},
}

@incollection {Frisch1999,
    AUTHOR = {Frisch, Sophie},
     TITLE = {Polynomial functions on finite commutative rings},
 BOOKTITLE = {Advances in commutative ring theory ({F}ez, 1997)},
    SERIES = {Lecture Notes in Pure and Appl. Math.},
    VOLUME = {205},
     PAGES = {323--336},
 PUBLISHER = {Dekker, New York},
      YEAR = {1999},
   MRCLASS = {13B25 (13F20 13M10)},
  MRNUMBER = {1767431},
MRREVIEWER = {William H. Gustafson},
}

@book {GG2013,
    AUTHOR = {von zur Gathen, Joachim and Gerhard, J\"{u}rgen},
     TITLE = {Modern computer algebra},
   EDITION = {Third},
 PUBLISHER = {Cambridge University Press, Cambridge},
      YEAR = {2013},
     PAGES = {xiv+795},
      ISBN = {978-1-107-03903-2},
   MRCLASS = {68W30 (11Y05 11Y11 13Pxx)},
  MRNUMBER = {3087522},
       DOI = {10.1017/CBO9781139856065},
       URL = {https://doi.org/10.1017/CBO9781139856065},
}

@article {Kartas2024,
    AUTHOR = {Kartas, Konstantinos},
     TITLE = {Decidability via the tilting correspondence},
   JOURNAL = {Algebra Number Theory},
  FJOURNAL = {Algebra \& Number Theory},
    VOLUME = {18},
      YEAR = {2024},
    NUMBER = {2},
     PAGES = {209--248},
      ISSN = {1937-0652},
   MRCLASS = {11U05 (11U09 14G45)},
  MRNUMBER = {4700694},
MRREVIEWER = {Franz-Viktor Kuhlmann},
       DOI = {10.2140/ant.2024.18.209},
       URL = {https://doi.org/10.2140/ant.2024.18.209},
}

@article {Khanduja1999,
    AUTHOR = {Khanduja, Sudesh K.},
     TITLE = {On {K}rasner's constant},
   JOURNAL = {J. Algebra},
  FJOURNAL = {Journal of Algebra},
    VOLUME = {213},
      YEAR = {1999},
    NUMBER = {1},
     PAGES = {225--230},
      ISSN = {0021-8693},
   MRCLASS = {12J20 (13A18)},
  MRNUMBER = {1674684},
MRREVIEWER = {Joachim Gr\"{a}ter},
       DOI = {10.1006/jabr.1998.7590},
       URL = {https://doi.org/10.1006/jabr.1998.7590},
}

@incollection {Khanduja2002,
    AUTHOR = {Khanduja, Sudesh K.},
     TITLE = {The minimum property of {K}rasner's constant},
 BOOKTITLE = {Valuation theory and its applications, {V}ol. {I} ({S}askatoon, {SK}, 1999)},
    SERIES = {Fields Inst. Commun.},
    VOLUME = {32},
     PAGES = {237--246},
 PUBLISHER = {Amer. Math. Soc., Providence, RI},
      YEAR = {2002},
   MRCLASS = {12J10 (12J20)},
  MRNUMBER = {1928372},
MRREVIEWER = {Niels Schwartz},
}

@inproceedings {Koenigsmann2018,
    AUTHOR = {Koenigsmann, Jochen},
     TITLE = {Decidability in local and global fields},
 BOOKTITLE = {Proceedings of the {I}nternational {C}ongress of
              {M}athematicians---{R}io de {J}aneiro 2018. {V}ol. {II}.
              {I}nvited lectures},
     PAGES = {45--59},
 PUBLISHER = {World Sci. Publ., Hackensack, NJ},
      YEAR = {2018},
   MRCLASS = {11U05 (03B25 12J10 12L05)},
  MRNUMBER = {3966756},
MRREVIEWER = {Martin David Davis},
       DOI = {10.1142/11060},
       URL = {https://doi.org/10.1142/11060},
}

@inproceedings {KN2025,
author = {Koprowski, Przemys\l{}aw and Novacoski, Josnei},
title = {Computing the local group of prime-power classes},
year = {2025},
isbn = {9798400720758},
publisher = {Association for Computing Machinery},
address = {New York, NY, USA},
url = {https://doi.org/10.1145/3747199.3747544},
doi = {10.1145/3747199.3747544},
booktitle = {Proceedings of the 2025 {I}nternational {S}ymposium on {S}ymbolic and {A}lgebraic {C}omputation},
pages = {34–41},
numpages = {8},
series = {ISSAC '25}
}

@misc{Kruckman2013,
      title={The {A}x-{K}ochen Theorem: an application of model theory to algebra}, 
      author={Alex Kruckman},
      year={2013},
      eprint={1308.3897},
      archivePrefix={arXiv},
      primaryClass={math.LO},
      url={https://arxiv.org/abs/1308.3897}, 
}

@book {Lang1994,
    AUTHOR = {Lang, Serge},
     TITLE = {Algebraic number theory},
    SERIES = {Graduate Texts in Mathematics},
    VOLUME = {110},
   EDITION = {Second},
 PUBLISHER = {Springer-Verlag, New York},
      YEAR = {1994},
     PAGES = {xiv+357},
      ISBN = {0-387-94225-4},
   MRCLASS = {11Rxx (11-01 11-02)},
  MRNUMBER = {1282723},
MRREVIEWER = {M. Ram Murty},
       DOI = {10.1007/978-1-4612-0853-2},
       URL = {https://doi.org/10.1007/978-1-4612-0853-2},
}

@article {Macintyre1976,
    AUTHOR = {Macintyre, Angus},
     TITLE = {On definable subsets of {$p$}-adic fields},
   JOURNAL = {J. Symbolic Logic},
  FJOURNAL = {The Journal of Symbolic Logic},
    VOLUME = {41},
      YEAR = {1976},
    NUMBER = {3},
     PAGES = {605--610},
      ISSN = {0022-4812},
   MRCLASS = {02H15 (12L99)},
  MRNUMBER = {485335},
       DOI = {10.2307/2272038},
       URL = {https://doi.org/10.2307/2272038},
}

@book {Murty2002,
    AUTHOR = {Murty, Maruti Ram},
     TITLE = {Introduction to {$p$}-adic analytic number theory},
    SERIES = {AMS/IP Studies in Advanced Mathematics},
    VOLUME = {27},
 PUBLISHER = {American Mathematical Society, Providence, RI; International Press, Somerville, MA},
      YEAR = {2002},
     PAGES = {x+149},
      ISBN = {0-8218-3262-X},
   MRCLASS = {11S80 (11-01 11S40)},
  MRNUMBER = {1913413},
MRREVIEWER = {Vinayak Vatsal},
       DOI = {10.1090/amsip/027},
       URL = {https://doi.org/10.1090/amsip/027},
}

@incollection {Murty2008,
    AUTHOR = {Murty, Maruti Ram},
     TITLE = {Polynomials assuming square values},
 BOOKTITLE = {Number theory \& discrete geometry},
    SERIES = {Ramanujan Math. Soc. Lect. Notes Ser.},
    VOLUME = {6},
     PAGES = {155--163},
 PUBLISHER = {Ramanujan Math. Soc., Mysore},
      YEAR = {2008},
   MRCLASS = {11R09 (11C08)},
  MRNUMBER = {2454304},
MRREVIEWER = {Xianke Zhang},
}

@PhdThesis{Pejkovic2012,
 author = {Tomislav Pejković},
 title = {Polynomial root separation and applications},
 school = {École Doctorale Mathématiques, Sciences de l'Information et de l'Ingénieur - ED 269},
 year = {2012},
 url = {\url{https://tel.archives-ouvertes.fr/tel-00656877v3}},
}

@book {PS1998,
    AUTHOR = {P\'{o}lya, George and Szeg\H{o}, Gabor},
     TITLE = {Problems and theorems in analysis. {II}},
    SERIES = {Classics in Mathematics},
      NOTE = {Theory of functions, zeros, polynomials, determinants, number
              theory, geometry,
              Translated from the German by C. E. Billigheimer,
              Reprint of the 1976 English translation},
 PUBLISHER = {Springer-Verlag, Berlin},
      YEAR = {1998},
     PAGES = {xii+392},
      ISBN = {3-540-63686-2},
   MRCLASS = {00A07 (01A75)},
  MRNUMBER = {1492448},
       DOI = {10.1007/978-3-642-61905-2\_7},
       URL = {https://doi.org/10.1007/978-3-642-61905-2_7},
}

\end{document}